\newtheorem{lemma}{Lemma}[section]
\newtheorem{thm}[lemma]{Theorem}
\newtheorem{prop}[lemma]{Proposition}
\newtheorem{corol}[lemma]{Corollary}
\newtheorem{conj}[lemma]{Conjecture}
\newtheorem{example}[lemma]{Example}
\newtheorem{defn}[lemma]{Definition}
\newtheorem{claim}[lemma]{Claim}
\title{gamma-vectors of edge subdivisions of the boundary of the cross polytope}
\author{NATALIE AISBETT}
\address{School of Mathematics and Statistics\\
University of Sydney, NSW, 2006\\
Australia}
\email{N.Aisbett@maths.usyd.edu.au}
\date{}
\begin{document}
\maketitle{}
\begin{abstract}
For any flag simplicial complex $\Theta$ obtained by stellar subdividing the boundary of the cross polytope in edges, we define a flag simplicial complex $\Gamma(\Theta)$ (dependent on the sequence of subdivisions) whose $f$-vector is the $\gamma$-vector of $\Theta$. This proves that the $\gamma$-vector of any such simplicial complex satisfies the Frankl-F\"{u}redi-Kalai inequalities, partially solving a conjecture by Nevo and Petersen \cite{np}. We show that when $\Theta$ is the dual simplicial complex to a nestohedron, and the sequence of subdivisions corresponds to a flag ordering as defined in \cite{ai}, that $\Gamma(\Theta)$ is equal to the flag simplical complex defined there.
\end{abstract}

\begin{section}{Introduction}
This paper relates to the theory of face enumeration of simplicial complexes. It gives a partial solution to a conjecture by Nevo and Petersen \cite{np} on flag homology spheres, which are a particular class of simplicial complexes whose definition can be found in \cite{ath}. The conjecture is proven for a sub class of flag homology spheres, namely those that can be obtained by subdividing the boundary of the cross polytope in edges. \\
 
Recall that for a $(d-1)$-dimensional simplicial complex $\Theta$, the \emph{$f$-polynomial} is a polynomial in $\mathbb{Z}[t]$ defined as follows:

$$f(\Theta)(t):= f_0 + f_1t + \cdots +f_{d}t^{d},$$ where $f_i(\Theta)$ is the number of $(i-1)$-dimensional faces of $\Theta$, and $f_0(\Theta) =1$. The \emph{$h$-polynomial} is given by

$$h(\Theta)(t): =(1-t)^{d}f(\Theta)\left(\frac{1}{1-t}\right).$$  When $\Theta$ is a homology sphere $h(\Theta)$ is symmetric (this is known as the Dehn-Sommerville relations) hence it can be written

$$h(\Theta)(t) = \sum_{i=0}^{\lfloor\frac{d}{2}\rfloor}\gamma_it^i(1+t)^{d-2i},$$ for some $\gamma_i \in \mathbb{Z}$. Then the \emph{$\gamma$-polynomial} is given by
$$\gamma(\Theta)(t): = \gamma_0 + \gamma_1t + \cdots +\gamma_{\lfloor \frac{d}{2}\rfloor}t^{\lfloor \frac{d}{2}\rfloor}.$$

\noindent The vectors of coefficients of the $f$-polynomial, $h$-polynomial and $\gamma$-polynomial are known respectively as the \emph{$f$-vector}, \emph{$h$-vector} and \emph{$\gamma$-vector}.\\

If $P$ is a simple $d$-dimensional polytope then the \emph{dual simplicial complex} $\Theta_P$ of $P$ is the boundary complex (of dimension $d-1$) of the polytope that is polar dual to $P$. The dual simplicial complex of a $d$-dimensional cube is the boundary complex of the cross polytope and is denoted $\Sigma_{d-1}$. Shaving a codimension two face of a simple polytope is equivalent to the stellar subdivision in an edge of the dual simplicial complex. The set of simplicial complexes that can be obtained from $\Sigma_{d-1}$ by stellar subdivisions in edges is denoted $sd(\Sigma_{d-1})$. By Corollary \ref{corflag} the simplicial complexes in $sd(\Sigma_{d-1})$ are flag homology spheres. \\

Recall that nestohedra are a broad class of simple polytopes introduced in \cite{po} and \cite{prw}. Volodin \cite{vol} has shown that any $d$-dimensional flag nestohedron can be obtained from shaving codimension two faces of the $d$-dimensional cube. Hence, the set $sd(\Sigma_{d-1})$ includes the dual simplicial complex to $d$-dimenisonal flag nestohedra.\\ 

Gal conjectured

\begin{conj}
\cite[Conjecture 2.1.7]{gal}. If $\Theta$ is a flag homology sphere then $\gamma(\Theta)$ is non negative.
\end{conj}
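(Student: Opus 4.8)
In full generality this conjecture is open, and there is no known structural description of arbitrary flag homology spheres that would make an inductive approach apply to all of them; the plan is to establish the instance of it furnished by the subclass $sd(\Sigma_{d-1})$. By Corollary~\ref{corflag} every $\Theta \in sd(\Sigma_{d-1})$ is a flag homology sphere, so this is a genuine special case of the conjecture. I would prove it by constructing a flag simplicial complex $\Gamma(\Theta)$ with $f(\Gamma(\Theta)) = \gamma(\Theta)$: the $f$-vector of any simplicial complex is non-negative (and that of a flag complex even satisfies the Frankl--F\"{u}redi--Kalai inequalities), so $\gamma(\Theta) \ge 0$ is then immediate.

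The construction runs by induction along a chosen sequence of stellar edge subdivisions $\Sigma_{d-1} = \Theta_0, \Theta_1, \dots, \Theta_N = \Theta$, so it depends on that sequence. The key computation is the effect of a stellar edge subdivision on $\gamma$: from the standard $h$-polynomial identity $h(\mathrm{sd}_e\Theta) = h(\Theta) + t\,h(\mathrm{lk}_\Theta e)$, together with the fact that $\mathrm{lk}_\Theta e$ has codimension two in $\Theta$ and the uniqueness of the $\gamma$-expansion, one obtains
$$\gamma(\mathrm{sd}_e\Theta)(t) = \gamma(\Theta)(t) + t\,\gamma(\mathrm{lk}_\Theta e)(t).$$
The base case is $\gamma(\Sigma_{d-1}) = 1$, realised by letting $\Gamma(\Sigma_{d-1})$ be the complex $\{\emptyset\}$.

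For the inductive step, note that the link of an edge of a complex in $sd(\Sigma_{d-1})$ is again an edge-subdivided cross-polytope boundary, of dimension two lower, so $\Gamma(\mathrm{lk}_\Theta e)$ has already been constructed, for the induced subdivision sequence. I would then set
$$\Gamma(\mathrm{sd}_e\Theta) \ :=\ \Gamma(\Theta)\ \cup\ \bigl(\{w\}\ast\Gamma(\mathrm{lk}_\Theta e)\bigr),$$
where $w$ is a new vertex and $\ast$ denotes the join. If $\Gamma(\mathrm{lk}_\Theta e)$ sits inside $\Gamma(\Theta)$ as a full subcomplex, then a one-line inclusion--exclusion using $f(\{w\}\ast\Lambda) = (1+t)\,f(\Lambda)$ yields $f(\Gamma(\mathrm{sd}_e\Theta)) = f(\Gamma(\Theta)) + t\,f(\Gamma(\mathrm{lk}_\Theta e))$, which matches the $\gamma$-recursion; moreover the only new edges run from $w$ to vertices of $\Gamma(\mathrm{lk}_\Theta e)$, so no clique of the union straddles the two pieces and the union of the two flag complexes is again flag.

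The heart of the argument, and where I expect the difficulty to lie for the subclass, is maintaining throughout the subdivision sequence the invariant that for every edge $e$ that might be subdivided next, $\Gamma(\mathrm{lk}_\Theta e)$ appears as a full subcomplex of $\Gamma(\Theta)$ on a controlled vertex set; this is what forces the construction to be genuinely sequence-dependent, since one must track the combinatorial type of $\mathrm{lk}_\Theta e$ after a string of subdivisions and the precise way its $\Gamma$-complex is embedded. Finally, for the conjecture exactly as stated, namely for all flag homology spheres, the real obstruction is that this machine only reaches spheres obtainable from $\Sigma_{d-1}$ by edge subdivisions; no analogous generative description of general flag homology spheres is available, so outside the cases treated here the conjecture stays open.
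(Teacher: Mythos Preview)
Your proposal is correct and follows exactly the paper's strategy: the recursion $\gamma(\mathrm{sd}_e\Theta)=\gamma(\Theta)+t\,\gamma(\mathrm{lk}_\Theta e)$ is Lemma~\ref{sdlink}, and the invariant you single out as the heart of the matter---that $\Gamma(\mathrm{lk}_\Theta e)$ sits as a full subcomplex of $\Gamma(\Theta)$---is precisely Proposition~\ref{bigproptwo}, established by tracking the vertex sets $K_{\Theta^k}(F)$ and the bijections $\phi_{\Theta^k,F}$ through the five cases $\mathcal{F}_1,\dots,\mathcal{F}_5$ describing how a face sits relative to the last subdivided edge. Like you, the paper treats only the subclass $sd(\Sigma_{d-1})$ and leaves the general conjecture open.
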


Gal's conjecture was proved for all $\Theta$ in the class $sd(\Sigma_{d-1})$ by Volodin in \cite[Theorem 9]{vol}. This work was done with polytopes rather than their dual simplicial complexes. Further progress on Gal's conjecture was made in Athanasiadis in \cite{ath}. Note that he also studied subdivisions in $\Sigma_{d-1}$ but of a more general kind.\\

Nevo and Petersen conjectured the following strengthening of Gal's conjecture:

\begin{conj}
\cite[Problem 6.4]{np}. If $\Theta$ is a flag homology sphere then $\gamma(\Theta)$ is the $f$-polynomial of a flag simplicial complex. \label{bigconj}
\end{conj}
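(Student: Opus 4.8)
\medskip
\noindent\textbf{A strategy toward Conjecture~\ref{bigconj}.}
The plan is to prove the statement constructively: to assign to \emph{every} flag homology sphere $\Theta$ a flag simplicial complex $\Gamma(\Theta)$ with $f(\Gamma(\Theta)) = \gamma(\Theta)$, built up by induction along a class of combinatorial moves starting from a cross polytope boundary. The base case is $\Theta = \Sigma_{d-1}$, where $\gamma(\Sigma_{d-1}) = 1$ and $\Gamma(\Sigma_{d-1})$ is a single vertex; joins are handled using $\gamma(\Theta_1 * \Theta_2) = \gamma(\Theta_1)\,\gamma(\Theta_2)$ by setting $\Gamma(\Theta_1 * \Theta_2) = \Gamma(\Theta_1) * \Gamma(\Theta_2)$, which is flag because a join of flag complexes is flag. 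The inductive engine for a stellar edge subdivision is available already in the present setting: from the identity $\gamma(sd_e\Theta)(t) = \gamma(\Theta)(t) + t\,\gamma(lk_\Theta e)(t)$ and the fact that $lk_\Theta e$ is a lower-dimensional flag homology sphere, one builds $\Gamma(sd_e\Theta)$ by attaching to $\Gamma(\Theta)$ a cone over a copy of $\Gamma(lk_\Theta e)$. The subtlety that forces careful bookkeeping is that this copy of $\Gamma(lk_\Theta e)$ must sit inside $\Gamma(\Theta)$ as a \emph{full} (induced) subcomplex: otherwise the cone apex together with a non-edge of $\Gamma(lk_\Theta e)$ that happens to be an edge of $\Gamma(\Theta)$ produces an empty triangle and destroys flagness.

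The crux of the general case is a \emph{generation theorem}: one needs a list $\mathcal{M}$ of flag-preserving local moves such that every flag homology sphere is obtained from some cross polytope boundary by finitely many moves in $\mathcal{M}$. Stellar edge subdivisions alone only reach $sd(\Sigma_{d-1})$, a proper subclass, so $\mathcal{M}$ must be strictly larger; natural candidates are the inverse moves (flag edge contractions), stellar subdivisions in higher-dimensional faces in the cases where the output is still flag, and the flag bistellar moves studied in the literature on flag spheres. For each $m \in \mathcal{M}$ the task is then twofold: (i) derive a recursion expressing $\gamma(m\Theta)$ through the $\gamma$-polynomials of $\Theta$ and of links and deletions of the faces involved in $m$, using the corresponding $h$-polynomial identity together with the symmetry of $h$; and (ii) realize that recursion by a flag-preserving operation on $\Gamma(\Theta)$ --- an induced subdivision, a one-vertex cone extension, or a local shelling-type step --- while maintaining throughout the inductive invariant that the relevant sub-$\Gamma$'s appear as full subcomplexes. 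Running these operations along a generating word for $\Theta$ then yields $\Gamma(\Theta)$. Alternatively one may avoid the explicit complex and instead feed the same recursions into the Frankl--F\"uredi--Kalai characterization of $f$-vectors of flag complexes, verifying those inequalities directly, which by their theorem is equivalent to the desired conclusion.

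The hard part --- and the reason the conjecture remains open beyond the family treated in this paper --- is the generation theorem. No finite list of flag-preserving moves is currently known to connect every flag homology sphere to a cross polytope boundary, and Pachner's bistellar connectivity theorem is of no help here because bistellar flips do not preserve flagness. Even granting such a theorem, step (ii) is delicate: a move that barely perturbs $\Theta$ can change its links enough that the induced operation on $\Gamma(\Theta)$ no longer visibly preserves flagness or the full-subcomplex invariant, so the moves in $\mathcal{M}$ must be chosen to be compatible with the sphere side and the $\Gamma$ side simultaneously. The realistic next increment, short of the full conjecture, is to enlarge $sd(\Sigma_{d-1})$ by admitting stellar subdivisions in higher-dimensional flag faces and to check that the construction of $\Gamma$ extends; this would already settle Conjecture~\ref{bigconj} for a strictly larger class of flag homology spheres and would pin down which structural features of $\mathcal{M}$ the argument genuinely requires.
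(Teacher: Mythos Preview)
Your proposal is not a proof and does not pretend to be one: it is an honest strategy sketch that correctly identifies the conjecture as open. That matches the paper, which also does \emph{not} prove Conjecture~\ref{bigconj} in full; the paper establishes only the special case $\Theta \in sd(\Sigma_{d-1})$ (Theorem~\ref{bigthm}).

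For that special case your outline is essentially the paper's argument. The paper uses exactly your inductive engine --- the identity $\gamma(\Theta') - \gamma(\Theta) = t\,\gamma(lk_\Theta S)$ of Lemma~\ref{sdlink} --- and builds $\Gamma(\Theta')$ from $\Gamma(\Theta)$ by coning a new vertex $w_k$ over a copy of $\Gamma(lk_\Theta S)$. Your observation that this copy must sit in $\Gamma(\Theta)$ as a \emph{full} subcomplex is precisely the content of Propositions~\ref{bigprop} and~\ref{bigproptwo}: the paper tracks, for every face $F$, a vertex set $K_{\Theta^k}(F) \subseteq V_{\Gamma(\Theta^k)}$ and proves that the induced subcomplex on it is isomorphic to $\Gamma$ of the induced subdivision sequence for $lk_{\Theta^k}(F)$. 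The bookkeeping you warn about is exactly what the $K$- and $W$-sets and the order-preserving bijections $\phi_{\Theta^k,F}$ accomplish. Your diagnosis of the obstruction to the full conjecture --- the absence of a flag-preserving generation theorem from cross polytope boundaries --- is also accurate.

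Two small corrections. First, in your base case, $\gamma(\Sigma_{d-1}) = 1$ forces $\Gamma(\Sigma_{d-1})$ to be the empty complex $\{\emptyset\}$ (with $f$-polynomial $1$), not a single vertex (which has $f$-polynomial $1+t$); the paper's Definition~\ref{maindefn} indeed starts with an empty vertex set. Second, your alternative route through the Frankl--F\"uredi--Kalai inequalities is strictly weaker than the conjecture: Frohmader's theorem says flag $f$-vectors satisfy FFK, but FFK characterizes $f$-vectors of \emph{balanced} complexes, and it is not known that every balanced $f$-vector is realized by a flag complex. So verifying the FFK inequalities for $\gamma(\Theta)$ would yield only a consequence of Conjecture~\ref{bigconj}, not the conjecture itself.
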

They proved this in \cite{np} for the following classes of flag simplicial spheres:

\begin{itemize}
\item $\Theta$ is a Coxeter complex,
\item $\Theta$ is the simplicial complex dual to an associahedron,
\item $\Theta$ is the simplicial complex dual to a cyclohedron,
\item $\Theta$ has $\gamma_1(\Theta) \le 3$,
\end{itemize}

We have shown in \cite{ai} that this conjecture holds for the dual simplicial complex of all flag nestohedera. In this paper we prove Conjecture \ref{bigconj} for all $\Theta \in sd(\Sigma_{d-1})$:

\begin{thm}
Suppose that $\Theta \in sd(\Sigma_{d-1})$. Then there is a flag simplicial complex $\Gamma(\Theta)$ such that $f(\Gamma(\Theta)) = \gamma(\Theta)$.
\end{thm}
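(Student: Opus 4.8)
The plan is to construct $\Gamma(\Theta)$ by induction on the length $n$ of a sequence of edge stellar subdivisions $\Sigma_{d-1}=\Theta_0,\ \Theta_1=\mathrm{sd}_{e_1}(\Theta_0),\ \dots,\ \Theta_n=\mathrm{sd}_{e_n}(\Theta_{n-1})=\Theta$, together with a compatible choice of $\Gamma$ for every edge link that arises along the way. The first ingredient is the effect of a single edge subdivision on the $\gamma$-polynomial. Starting from the standard formula for the $h$-polynomial under stellar subdivision of a face $\sigma$ with $k$ vertices, $h(\mathrm{sd}_\sigma\Delta)(t)=h(\Delta)(t)+(t+\cdots+t^{k-1})\,h(\mathrm{lk}_\Delta\sigma)(t)$, specialised to an edge $e$ (so $k=2$), and using that by Corollary \ref{corflag} every complex in $sd(\Sigma_{d-1})$ is a flag homology sphere — whence, by the fact that links of faces of flag homology spheres are again flag homology spheres, $\mathrm{lk}_\Theta e$ has a $\gamma$-expansion — I would match the coefficients of the Dehn--Sommerville symmetric decompositions in dimensions $d-1$ and $d-3$ to obtain
$$\gamma(\mathrm{sd}_e\Theta)(t)=\gamma(\Theta)(t)+t\,\gamma(\mathrm{lk}_\Theta e)(t).$$
Since $h(\Sigma_{d-1})=(1+t)^d$ we have $\gamma(\Sigma_{d-1})(t)=1$, which is the $f$-polynomial of the simplicial complex $\{\emptyset\}$ consisting of the empty face alone; so we set $\Gamma(\Sigma_{d-1}):=\{\emptyset\}$, a (vacuously) flag complex.

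Next I would turn the recursion into a construction of $\Gamma$. Assume inductively that $\Gamma(\Theta_i)$ is defined and flag with $f(\Gamma(\Theta_i))=\gamma(\Theta_i)$ for $i<n$, and likewise for all relevant edge links, and that each $\Gamma$ of an edge link is realised as a full subcomplex of the corresponding $\Gamma(\Theta_i)$. Writing $e=e_n$ and $L=\mathrm{lk}_{\Theta_{n-1}}e$, pick a new vertex $v$ and set
$$\Gamma(\Theta):=\Gamma(\Theta_{n-1})\cup\bigl(v*\Gamma(L)\bigr),$$
where the two pieces are glued along the full subcomplex $\Gamma(L)\subseteq\Gamma(\Theta_{n-1})$. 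Then the faces of $\Gamma(\Theta)$ are the faces of $\Gamma(\Theta_{n-1})$ together with the sets $\{v\}\cup\tau$ for $\tau$ a face of $\Gamma(L)$, so $f(\Gamma(\Theta))(t)=f(\Gamma(\Theta_{n-1}))(t)+t\,f(\Gamma(L))(t)=\gamma(\Theta_{n-1})(t)+t\,\gamma(L)(t)=\gamma(\Theta)(t)$. Flagness is preserved: a minimal non-face of $\Gamma(\Theta)$ that avoids $v$ is a size-two non-face of the flag complex $\Gamma(\Theta_{n-1})$, and one containing $v$, say $\{v\}\cup\tau$, forces $\tau\in\Gamma(\Theta_{n-1})$ with every proper subset of $\tau$ in $\Gamma(L)$; since $\Gamma(L)$ is full in $\Gamma(\Theta_{n-1})$ this is impossible unless $|\tau|\le 1$, so all minimal non-faces through $v$ have size two as well.

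The crux is to make this bookkeeping coherent, and I expect it to be the main obstacle. One must show that every edge link encountered is itself an iterated edge stellar subdivision of a lower-dimensional cross polytope boundary, equipped with a canonical subdivision sequence induced from that of $\Theta$, and that the complexes $\Gamma$ attached to these links remain full subcomplexes of the ambient $\Gamma(\Theta_i)$ compatibly with each further subdivision. This reduces to a careful analysis of how $\mathrm{lk}_{\mathrm{sd}_e\Theta}(e')$ is built from links in $\Theta$: one separates the case that $e'$ contains the newly added vertex $v_e$ from the case that it does not, and within the latter whether $e'$ meets $e$ and whether $e$ survives as an edge of $\mathrm{lk}_\Theta(e')$; in each case $\mathrm{lk}_{\mathrm{sd}_e\Theta}(e')$ turns out to be either unchanged, an edge stellar subdivision of $\mathrm{lk}_\Theta(e')$ (possibly after relabelling a vertex by $v_e$), or a cross polytope boundary. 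Propagating these identifications through the entire tower, and checking that the induced embeddings of the $\Gamma$'s of links stay full at every stage — so that the inductive hypothesis above is maintained — is where essentially all the combinatorial work lies; the $f$-polynomial and flagness verifications of the previous paragraph are comparatively routine once this is established.

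Finally, with the construction in hand I would address the last assertion of the abstract by specialising to the case $\Theta=\Theta_P$ dual to a nestohedron with a subdivision sequence arising from a flag ordering in the sense of \cite{ai}: one matches the vertices $v_{e_i}$ and the gluings produced by the recursion above against the faces and incidences of the flag complex defined in \cite{ai}, and checks by induction on $n$ that the two flag complexes coincide.
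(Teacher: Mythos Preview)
Your proposal is correct and follows essentially the same route as the paper: use the recursion $\gamma(\mathrm{sd}_e\Theta)=\gamma(\Theta)+t\,\gamma(\mathrm{lk}_\Theta e)$ (Lemma~\ref{sdlink}), cone the new vertex over a full subcomplex isomorphic to $\Gamma$ of the edge link, and verify that induced subdivision sequences on links keep this fullness intact---the paper carries out exactly this bookkeeping via the sets $K_{\Theta^k}(F)$, $W_{\Theta^k}(F)$ and the bijections $\phi_{\Theta^k,F}$ (Propositions~\ref{bigprop} and~\ref{bigproptwo}), defining $\Gamma$ directly as the flag complex on the resulting graph so that flagness is automatic. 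One small correction to your case analysis: when $e'=\{v_e,x\}$ with $x\notin e$, the link $\mathrm{lk}_{\mathrm{sd}_e\Theta}(e')$ is not a bare cross polytope boundary but the suspension $\mathrm{lk}_\Theta(e\cup\{x\})*\Sigma_0$, which inherits its subdivision sequence by suspending that of $\mathrm{lk}_\Theta(e\cup\{x\})$ (the paper's case $\mathcal{F}_3$).
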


The definition of $\Gamma(\Theta)$ given in this paper (Definition \ref{maindefn} below) is dependent upon the sequence of subdivisions of $\Sigma_{d-1}$that produce $\Theta$. We show that this construction coincides with the simplicial complexes defined in \cite{ai} when $\Theta$ is the dual simplicial complex of a flag nestohedron. This is not immediately obvious, since the definition in \cite{ai} used the language of building sets.\\

Frohmader \cite[Theorem 1.1]{fr} showed that the $f$-vector of any flag simplicial complex satisfies the Frankl-F\"{u}redi-Kalai (FFK) inequalities (see \cite{ffk}). So Theorem 1.3 implies that $\gamma(\Theta)$ satisfies the FFK inequalities, for all $\Theta\in sd(\Sigma_{d-1})$. In \cite{npt} Nevo Petersen and Tenner show that the $f$-vector of the barycentric subdivision of a homology sphere satisfies the FFK inequalitites.\\

Here is a summary of the contents of this paper. Section 2 contains preliminary well known definitions. Section 3 contains more specific definitions for this paper, as well as some propositions relating to them. Section 4 contains Theorem \ref{bigthm} which is the main result of the paper. Section 5 compares the flag simplicial complexes defined for $\Theta_P$ where $P$ is a flag nestohedron, to those defined in \cite{ai}. \\

\textbf{Acknowledgements}\\

This paper forms part of my PhD research in the School of Mathematics and Statistics at the University of Sydney. I would like to thank my supervisor Anthony Henderson for his feedback and careful reading.

\end{section}

\begin{section}{Definitions}
A \emph{simplicial complex} $\Theta$ with vertex set $V_{\Theta}$ is a set of subsets of $V_{\Theta}$ such that every singleton $\{v\} \in V_{\Theta}$ belongs to $\Theta$, and if $S \in \Theta$ and $I \subseteq S$ then $I \in \Theta$. Elements in $\Theta$ are called \emph{faces}, and the \emph{dimension} of a face $S$ is equal to $|S|-1$.\\

A simplicial complex is \emph{flag} if for every set $S \subseteq V_{\Theta}$ such that for all $a,b \in S$, $\{a,b\} \in \Theta$ we have $S \in \Theta$. A flag simplicial complex is determined by its underlying graph, since the faces are the cliques in this graph.\\

The \emph{link} of a face $F$ in a simplicial complex $\Theta$, denoted $lk_{\Theta}(F)$ is the following subcomplex of $\Theta$.

$$lk_{\Theta}(F) : = \{G \in \Theta~|~G \cup F \in \Theta,~ G \cap F = \emptyset\}.$$

If the simplicial complex is flag then the link of a face is the induced subcomplex on the set of vertices that are adjacent to every vertex in $F$.\\

The \emph{stellar subdivision}, or \emph{subdivision}, of a simplicial complex $\Theta$ in the face $F$ is the simplicial complex $\Theta'$ given by:
\begin{itemize}
\item $\Theta'$ has vertices $V_{\Theta'} =V_{\Theta} \cup \{s\}$ where $s \not \in V_{\Theta}$,\\
\item $\Theta'$ contains all sets in $\Theta$ that do not include $F$, and does not contain any set $K \in \Theta$ such that $F \subseteq K$,\\
\item $\Theta'$ contains sets $\tau \cup\{s\}$ for all $\tau \in \Theta$ such that $F \not \subseteq \tau$, and $\tau \cup F \in \Theta$.
\end{itemize}

If $F$ is simplex, we denote by $F^{\circ}$ the stellar subdivision of $F$ in the face $F$. If $\Theta_1$ and $\Theta_2$ are simplicial complexes, then the \emph{join} of $\Theta_1$ and $\Theta_2$, denoted $\Theta_1*\Theta_2$ is the simplicial complex on the vertex set $V_{\Theta_1} \cup V_{\Theta_2}$ defined by 

$$\Theta_1*\Theta_2:=\{F_1 \cup F_2~|~F_1 \in \Theta_1,~F_2 \in \Theta_2\}.$$ Simplicial complexes $\Theta_1$ and $\Theta_2$ are equivalent, denoted $\Theta_1 \cong \Theta_2$ if there is an inclusion preserving bijection between their faces.

\begin{claim}(See \cite[End of Section 2]{ath}).
If $\Theta$ is a flag simplicial complex, and we perform a stellar subdivision on an edge $S$ of $\Theta$ to obtain the simplicial complex $\Theta'$, then $\Theta'$ is a flag simplicial complex. \label{flagflag}
\end{claim}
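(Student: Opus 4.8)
The plan is to verify directly that $\Theta'$ satisfies the defining property of a flag complex: if $T \subseteq V_{\Theta'}$ has all of its $2$-element subsets in $\Theta'$, then $T \in \Theta'$. Write $S = \{a,b\}$ for the subdivided edge and let $s$ be the new vertex, so $V_{\Theta'} = V_\Theta \cup \{s\}$. I would split into two cases according to whether $s \in T$.

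First suppose $s \notin T$, so $T \subseteq V_\Theta$. Since every edge of $\Theta'$ contained in $V_\Theta$ is also an edge of $\Theta$ (the subdivision only removes faces containing $S$, and $S$ has no proper faces to lose, while it adds only faces through $s$), all $2$-subsets of $T$ lie in $\Theta$, hence $T \in \Theta$ by flagness of $\Theta$. It remains to check $S \not\subseteq T$; but if $S \subseteq T$ then $\{a,b\}$ would be an edge of $\Theta'$, contradicting the fact that the subdivision deletes $S$. So $T$ contains no set equal to or containing $S$, and therefore $T \in \Theta'$.

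Now suppose $s \in T$, and write $T = T_0 \cup \{s\}$ with $T_0 \subseteq V_\Theta$. For each $v \in T_0$ the edge $\{s,v\}$ lies in $\Theta'$, which by the description of the added faces means $\{v\} \cup S \in \Theta$, i.e. $v$ is adjacent in $\Theta$ to both $a$ and $b$ (or $v \in \{a,b\}$, but then $\{s,v\}\in\Theta'$ forces the other endpoint of $S$ to be adjacent to $v$, impossible unless... — this degenerate sub-case I would handle by noting $a,b$ are \emph{not} adjacent to $s$ unless $\{a,b\}$ remained, which it does not; so in fact $T_0 \cap S = \emptyset$). Also every $2$-subset of $T_0$ is an edge of $\Theta'$ not containing $s$, hence an edge of $\Theta$ as in the first case, and $S \not\subseteq T_0$. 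Thus $T_0 \in \Theta$ by flagness, and moreover every vertex of $T_0$ is adjacent in $\Theta$ to both endpoints of $S$; since $\Theta$ is flag this gives $T_0 \cup S \in \Theta$. As $S \not\subseteq T_0$, the third bullet in the definition of stellar subdivision yields $T_0 \cup \{s\} = T \in \Theta'$, as desired.

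The only delicate point — and the one I would be most careful about — is the bookkeeping around the endpoints $a,b$ of the subdivided edge: one must confirm that $a$ and $b$ are non-adjacent in $\Theta'$, that neither is adjacent to $s$, and hence that in the second case $T_0$ automatically avoids $\{a,b\}$. Everything else is a routine unwinding of the definitions of stellar subdivision, link, and flagness, so no further computation is needed.
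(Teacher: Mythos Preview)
Your overall approach matches the paper's: split on whether the new vertex $s$ lies in the candidate face, and in the second case use flagness of $\Theta$ to get $T_0 \cup S \in \Theta$ and then apply the third bullet in the definition of stellar subdivision. The core argument is correct.

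However, your parenthetical digression contains a genuine factual error. You assert that $a$ and $b$ are \emph{not} adjacent to $s$ in $\Theta'$, and from this conclude $T_0 \cap S = \emptyset$. This is false: taking $\tau = \{a\}$ in the third bullet gives $S \not\subseteq \{a\}$ and $\{a\} \cup S = S \in \Theta$, so $\{a,s\} \in \Theta'$; similarly $\{b,s\} \in \Theta'$. (Geometrically this is exactly what subdividing an edge does: it replaces $\{a,b\}$ by the path $\{a,s\}$, $\{s,b\}$.) So $T_0$ may well contain $a$ or $b$.

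Fortunately this error is harmless for the proof, because you never actually \emph{use} $T_0 \cap S = \emptyset$. What you do use is $S \not\subseteq T_0$, which is correct (if both $a,b \in T_0 \subseteq T$ then $\{a,b\}$ would have to be an edge of $\Theta'$, and it isn't). With that in hand, the deduction $T_0 \cup S \in \Theta$ from flagness of $\Theta$ goes through verbatim even when one of $a,b$ lies in $T_0$: for any $v \in T_0$ the condition $\{s,v\} \in \Theta'$ still forces $\{v\} \cup S \in \Theta$, regardless of whether $v \in S$. So simply delete the parenthetical and the mistaken claim in your final paragraph, and the proof is complete and essentially identical to the paper's.
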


\begin{proof}
Consider a set $L$ of vertices of $\Theta'$ such that any pair of vertices in $L$ is in $\Theta'$. We will show that $L \in \Theta'$.\\

If $s \not \in L$ then every two element subset of $L$ does not include $s$, and so they were all in $\Theta$. This implies that $L$ was in $\Theta$ and since $S \not \subseteq L$ this implies that $L \in \Theta'$.\\

Suppose that $s \in L$. Let $\tau$ denote $L/\{s\}$. Then all two element sets in $\tau$ are all in $\Theta$ so that $\tau \in \Theta$. Since $\{s\} \cup \{v\} \in \Theta'$ for all $v \in \tau$ this implies that $\{v\} \cup S \in \Theta$ for all $v \in \tau$. This implies that $\tau \cup S \in \Theta$ since $\Theta$ is flag, and hence that $L \in \Theta'$. 
\end{proof}

\begin{corol}
The simplicial complexes in $sd(\Sigma_{d-1})$ are flag homology spheres. \label{corflag}
\end{corol}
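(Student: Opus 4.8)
The plan is to combine two facts: first, that the boundary of the cross polytope $\Sigma_{d-1}$ is itself a flag homology sphere, and second, that stellar subdivision in an edge preserves both the flag property and the property of being a homology sphere. The first fact is standard: $\Sigma_{d-1}$ is the join of $d$ copies of the $0$-sphere $S^0$ (two points), hence is a homology $(d-1)$-sphere, and it is flag because its minimal non-faces are exactly the $d$ pairs of antipodal vertices, so any clique in its graph is a face. The flag part of the second fact is precisely Claim \ref{flagflag}, which I may cite directly.

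The remaining point is that stellar subdivision in a face preserves the homology-sphere property. I would argue this as follows: if $\Theta$ is a homology $(d-1)$-sphere and $\Theta'$ is obtained by stellar subdivision in a face $F$, then $\Theta'$ is PL-homeomorphic (indeed, has a common subdivision) with $\Theta$; more concretely, stellar subdivision in $F$ replaces the closed star $\overline{\mathrm{st}}_\Theta(F) = F * \mathrm{lk}_\Theta(F)$ by $(F^\circ * \mathrm{lk}_\Theta(F))$, and since $F^\circ$ and $F$ are both PL-balls of the same dimension with the same boundary $\partial F$, the two complexes are PL-homeomorphic. In particular their geometric realizations are homeomorphic, so $\Theta'$ is again a homology $(d-1)$-sphere. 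This is a well-known property of stellar subdivisions (it is implicit in the discussion of subdivisions in \cite{ath}), so in the write-up I would simply invoke it rather than reprove it in detail.

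Putting these together: starting from $\Sigma_{d-1}$, which is a flag homology $(d-1)$-sphere, each element of $sd(\Sigma_{d-1})$ is obtained by a finite sequence of stellar subdivisions in edges. By Claim \ref{flagflag} each such subdivision keeps the complex flag, and by the preservation of the homology-sphere property each keeps it a homology $(d-1)$-sphere. An easy induction on the number of subdivisions then gives that every $\Theta \in sd(\Sigma_{d-1})$ is a flag homology sphere.

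The only real obstacle is the homology-sphere half of the induction step, i.e. verifying carefully that stellar subdivision in an edge does not change the homeomorphism type; this is routine in PL topology but is the one place where one must be a little careful (for instance to check that the link of the new vertex $s$ is $\partial S * \mathrm{lk}_\Theta(S) \cong S^0 * \mathrm{lk}_\Theta(S)$, which is a sphere of the right dimension, and that links of old vertices are unchanged up to subdivision). Since the excerpt treats $sd(\Sigma_{d-1})$ as living inside the theory of flag homology spheres from \cite{ath}, I would expect the intended proof to simply cite this standard fact about stellar subdivisions together with Claim \ref{flagflag}.
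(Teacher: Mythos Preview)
Your proposal is correct and follows essentially the same approach as the paper: cite Claim~\ref{flagflag} for the flag property and invoke the fact that stellar subdivision does not change the topology for the homology-sphere property, then induct. The paper's own proof is a two-line version of exactly this argument, simply asserting the topological invariance rather than spelling out the PL-homeomorphism details you provide.
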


\begin{proof}
The simplicial complexes in $sd(\Sigma_{d-1})$ are flag by Claim \ref{flagflag}, and they are homology spheres since stellar subdivisions do not change the topoplogy of the simplicial complex. 
\end{proof}

\begin{lemma}(Compare \cite[Corollary 1]{vol}).
Suppose $\Theta'$ is a flag homology sphere obtained from a flag homology sphere $\Theta$ (of dimension $d-1$) by stellar subdividing an edge $S$. Then
$$\gamma(\Theta') -\gamma(\Theta)= t\gamma(lk_{\Theta}(S)).$$ \label{sdlink}
\end{lemma}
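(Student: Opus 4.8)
The plan is to compute the effect of a stellar edge subdivision on the $h$-polynomial first, and then translate that into a statement about $\gamma$-polynomials using the $\gamma$-expansion of a symmetric polynomial. Recall the standard fact (see e.g.\ the relation between face subdivision and $h$-vectors) that if $\Theta'$ is obtained from $\Theta$ by stellar subdivision in a face $F$, then
$$h(\Theta')(t) - h(\Theta)(t) = t\,(1-t)^{\dim F}\bigl(h(\mathrm{lk}_\Theta(F))(t) - h(\partial F)(t)\bigr)\cdot(\text{something}),$$
but for the special case $F=S$ an edge this simplifies considerably. Precisely, subdividing an edge $S=\{a,b\}$ removes the faces containing $S$, which are exactly the faces of $S * \mathrm{lk}_\Theta(S)$ that contain both $a$ and $b$, and adds the faces of $\{s\} * \partial S * \mathrm{lk}_\Theta(S) = \{s\} * \{a\} * \{b\} *\!\!\!\cdots$ — more carefully, the new complex locally replaces $S * \mathrm{lk}_\Theta(S)$ by $(S^\circ) * \mathrm{lk}_\Theta(S)$, where $S^\circ$ is the subdivided edge (a path $a$–$s$–$b$). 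So first I would record the clean identity
$$h(\Theta')(t) - h(\Theta)(t) = t\,h(\mathrm{lk}_\Theta(S))(t),$$
which follows because $h(S^\circ)(t) - h(S)(t) = t$ (a path on three vertices has $h$-polynomial $1+t$, an edge has $h$-polynomial $1$) together with multiplicativity of $h$-polynomials under joins, $h(\Theta_1 * \Theta_2) = h(\Theta_1)h(\Theta_2)$, and the fact that the faces of $\Theta$ and $\Theta'$ outside the open stars of $S$ and $s$ respectively coincide.

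Next I would pass from $h$ to $\gamma$. Write $d-1 = \dim\Theta = \dim\Theta'$ and note $\mathrm{lk}_\Theta(S)$ has dimension $d-3$, so its $\gamma$-polynomial is defined via $h(\mathrm{lk}_\Theta(S))(t) = \sum_i \gamma_i(\mathrm{lk}_\Theta(S))\, t^i (1+t)^{d-2-2i}$. Substituting the $\gamma$-expansions of $h(\Theta')$, $h(\Theta)$ and $h(\mathrm{lk}_\Theta(S))$ into the identity above gives
$$\sum_i \bigl(\gamma_i(\Theta') - \gamma_i(\Theta)\bigr) t^i (1+t)^{d-2i} = t \sum_i \gamma_i(\mathrm{lk}_\Theta(S))\, t^i (1+t)^{d-2-2i} = \sum_i \gamma_i(\mathrm{lk}_\Theta(S))\, t^{i+1}(1+t)^{d-2(i+1)}.$$
Since $\{t^i(1+t)^{d-2i}\}_{0 \le i \le \lfloor d/2\rfloor}$ is a basis for the space of symmetric (palindromic) polynomials of degree $d$, matching coefficients yields $\gamma_0(\Theta') - \gamma_0(\Theta) = 0$ and $\gamma_i(\Theta') - \gamma_i(\Theta) = \gamma_{i-1}(\mathrm{lk}_\Theta(S))$ for $i \ge 1$, which is exactly $\gamma(\Theta') - \gamma(\Theta) = t\,\gamma(\mathrm{lk}_\Theta(S))$.

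**The main obstacle** is establishing the $h$-polynomial identity $h(\Theta') - h(\Theta) = t\,h(\mathrm{lk}_\Theta(S))$ carefully — i.e.\ correctly bookkeeping which faces are deleted and which are created. One must check that a stellar subdivision in an edge $S$ acts as the identity on all faces disjoint (in the sense of the open star) from $S$, and that on the closed star it replaces the cone structure $S * \mathrm{lk}_\Theta(S)$ by $S^\circ * \mathrm{lk}_\Theta(S)$ — this uses that $\Theta$ is flag so that $\mathrm{lk}_\Theta(S)$ is the induced subcomplex on the common neighbours of $a$ and $b$, and that $S * \mathrm{lk}_\Theta(S)$ really is a subcomplex of $\Theta$. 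The rest — join-multiplicativity of $h$, the computation $h(S^\circ) - h(S) = t$, and the linear-algebra step in the $\gamma$-basis — is routine. I would also double-check the degree conventions so that the shift $t^i(1+t)^{d-2i} \mapsto t \cdot t^{i}(1+t)^{(d-2)-2i} = t^{i+1}(1+t)^{d-2(i+1)}$ is bookkept consistently; this is where an off-by-one in $d$ would most easily creep in. An alternative, if the direct face count is delicate, is to cite the known formula of Volodin (\cite[Corollary 1]{vol}), which the statement explicitly invites by saying ``Compare''; but I would prefer the self-contained join-multiplicativity argument.
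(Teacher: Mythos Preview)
Your proposal is correct and follows essentially the same route as the paper: establish $h(\Theta') - h(\Theta) = t\,h(\mathrm{lk}_\Theta(S))$ via join multiplicativity (the paper does this one level down, computing $f(\Theta') - f(\Theta) = f(\mathrm{lk}_\Theta(S))\cdot t(1+t)$ and then applying the $f\mapsto h$ transform), and then reads off the $\gamma$-identity in the basis $\{t^i(1+t)^{d-2i}\}$. One aside: the closed-star identity $\overline{\mathrm{st}}_\Theta(S) = S * \mathrm{lk}_\Theta(S)$ holds in any simplicial complex, so flagness is not actually needed at that step.
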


\begin{proof}

If we stellar subdivide a face $F$ in a simplicial complex $\Theta$ to obtain $\Theta'$, the change in the $f$-vector is
$$f(\Theta') -f(\Theta) = f(F^{\circ}*lk(F)) - f(F*lk(F)),$$ since the set of faces in $\Theta - \Theta'$ is $F*lk(F)$ and the set of faces in $\Theta'-\Theta$ is $F^{\circ}*lk(F)$.\\

In general for simplicial complexes $A$ and $B$ we have
$$f(A*B) = f(A)f(B).$$ Hence
$$f(\Theta') -f(\Theta) = f(lk_{\Theta}(S))[f(S^{\circ}) -f(S)]$$
$$=f(lk_{\Theta}(S))[1+3t+2t^2-(1+2t+t^2)] = f(lk_{\Theta}(S))[t(1+t)].$$

Then $$h(\Theta') - h(\Theta) = (1-t)^df(lk_{\Theta}(S))(\frac{t}{1-t})[\frac{t}{1-t}(1+\frac{t}{1-t})]$$
$$=(1-t)^df(lk_{\Theta}(S))(\frac{t}{1-t})[\frac{t}{(1-t)^2}]$$
$$=t(1-t)^{d-2}f(lk_{\Theta}(S))(\frac{t}{1-t})$$
$$=th(lk_{\Theta}(S)).$$

So $$\gamma(\Theta')\left(\frac{t}{(1+t)^2}\right) - \gamma(\Theta)\left(\frac{t}{(1+t)^2}\right) = \frac{t}{(1+t)^2}\gamma(lk_{\Theta}(S))\left(\frac{t}{(1+t)^2}\right).$$ The result follows.
\end{proof}

\end{section}

\begin{section}{Subdivision sequences}

For the purposes of this paper, say that a \emph{subdivision sequence} is a sequence of simplicial complexes 
$$(\Theta^0,\Theta^1,...,\Theta^k)$$ where $\Theta^0$ is equivalent to $\Sigma_{d-1}$ for some $d$ and each $\Theta^i$ ($i=1,...,k$) is obtained from $\Theta^{i-1}$ by subdividing an edge. (Not up to equivalence, but literally, so the set of vertices of $\Theta^{i}$ consists of the set of vertices of $\Theta^{i-1}$ together with one new vertex). Note that the edge that gets subdivided is determined by the sequence. Call $\Theta^k$ the \emph{result} of the subdivision sequence. For $i = 1,2,...,k$ we label the unique vertex of $\Theta^i$ that is not contained in $\Theta^{i-1}$ by $w_i$, so that $V_{\Theta^k} - V_{\Theta^0} =\{w_1,w_2,...,w_k\}$. \\

Suppose that $(\Theta^0,...,\Theta^k)$ is a subdivision sequence and that $S = \{s_{a},s_{b}\}$ is the $k$th edge subdivided. Then the faces of $\Theta^k$ are in one of the following five sets:\\

$$\mathcal{F}_1: = \{F \in \Theta^k~|~s_a~\hbox{or}~s_b \in F,~\hbox{and}~ w_k \not \in F \},$$

$$\mathcal{F}_2: = \{F \in \Theta^k~|~s_a~\hbox{or}~s_b \in F,~\hbox{and}~w_k \in F\},$$

$$\mathcal{F}_3: = \{F \in \Theta^k~|~ s_a,~s_b \not \in F, ~\hbox{and}~ w_k \in F\},$$

$$\mathcal{F}_4: =\{F \in \Theta^k~|~s_a,~s_b,~w_k \not \in F,~\hbox{and}~\{w_k\} \in lk_{\Theta^k}(F)\},$$

$$\mathcal{F}_5: =\{F \in \Theta^k~|~s_a,~s_b,~w_k \not \in F, ~\hbox{and}~ \{w_k\} \not \in lk_{\Theta^k}(F)\}.$$

Then it is not too hard to show the following:\\
\begin{itemize}
\item[(1)] If $F \in \mathcal{F}_1$ then $$lk_{\Theta^k}(F) \cong lk_{\Theta^{k-1}}(F).$$ If $s_a \in F$ then the vertex $w_k$ in $lk_{\Theta^k}(F)$ replaces the vertex $s_b$ in $lk_{\Theta^{k-1}}(F)$, and if $s_b \in F$ then the vertex $w_k$ in $lk_{\Theta^k}(F)$ replaces the vertex $s_a$ in $lk_{\Theta^{k-1}}(F)$. Otherwise the links are identical.\\

\item[(2)]If $F \in \mathcal{F}_2$, then $$lk_{\Theta^k}(F) = lk_{\Theta^{k-1}}(F-\{w_k\} \cup \{s_b\})$$ if $s_a \in F$ or $$lk_{\Theta^k}(F) = lk_{\Theta^{k-1}}(F-\{w_k\} \cup \{s_a\})$$ if $s_b \in F$.\\

\item[(3)] If $F \in \mathcal{F}_3$ then $$lk_{\Theta^k}(F) =lk_{\Theta^{k-1}}(F-\{w_k\} \cup S)*\Sigma_0,$$ with $s_a,~s_b$ being the vertices of $\Sigma_0$.\\

\item[(4)]If $F \in \mathcal{F}_4$ then $lk_{\Theta^k}(F)$ is the stellar subdivision of $lk_{\Theta^{k-1}}(F)$ in $S$.\\

\item[(5)]If $F \in \mathcal{F}_5$ then $lk_{\Theta^k}(F) =lk_{\Theta^{k-1}}(F)$.\\ 
\end{itemize}

Given a subdivision sequence $(\Theta^0,...,\Theta^k)$, and a face $F$ of $\Theta^k$, there is an induced subdivision sequence
$$(\Phi^0(F),..., \Phi^{l_{F}}(F))_{(\Theta^0,...,\Theta^k)}$$ where $l_F \le k$ that we describe next, whose result $\Phi^{l_F}(F)$ is the simplicial complex $lk_{\Theta^k}(F)$. If the subdivision sequence $(\Theta^0,...,\Theta^k)$ is clear we abbreviate this to the notation $(\Phi^0(F),..., \Phi^{l_{F}}(F))$. The fact that $lk_{\Theta^k}(F)$ is in $sd(\Sigma_{d-1-|F|})$ can be deduced from the definition of the induced subdivision sequence. \\ 

The definition of the induced subdivision sequence is inductive on $k$. If $k=0$ the subdivision sequence $(\Theta^0)$ consists of a single simplicial complex equivalent to $\Sigma_{d-1}$, so that for all $F \in \Sigma_{d-1}$, $lk_{\Theta^0}(F)$ is equivalent to $\Sigma_{d-1-|F|}$. Hence we define the induced sequence to have no subdivisions and set $\Phi^0(F)=lk_{\Theta^0}(F)$.\\

If $k \ge  1$, we assume by induction on $k$ that there is an induced subdivision sequence $$(\Phi^0(F),...,\Phi^{j_{F}}(F))_{(\Theta^0,...,\Theta^{k-1})}$$ for all faces $F \in \Theta^{k-1}$ whose result is $lk_{\Theta^{k-1}}(F)$. Then for any face $F \in \Theta^k$ we consider which of the five sets $F$ lies in (again we suppose the last edge to be subdivided is $S =\{s_a,s_b\}$). Then the subdivision sequence $(\Phi^0(F),...,\Phi^{l_F}(F))_{(\Theta^0,...,\Theta^k)}$ is defined to be:\\

\begin{itemize}
\item[(1)] If $F \in \mathcal{F}_1$ then $l_{F}=j_{F}$ and the simplicial complexes of the induced subdivision sequence $(\Phi^0(F),...,\Phi^{l_F}(F))_{(\Theta^0,...,\Theta^k)}$ are equivalent to the simplicial complexes of $(\Phi^0(F),..., \Phi^{j_{F}}(F))_{(\Theta^0,...,\Theta^{k-1})}$. The map on the vertices is the identity, except that $w_k$ replaces $s_a$ or $s_b$ if either is contained in the sequence. In this case, since $F \in \Theta^{k-1}$, we are giving $lk_{\Theta^k}(F)$ (up to equivalence) the subdivision sequence that is given for $lk_{\Theta^{k-1}}(F)$. \\

\item[(2)] If $F \in \mathcal{F}_2$ and $s_a \in F$ then $l_F = j_{F-\{w_k\} \cup \{s_b\}}$ and the subdivision sequence 
$$(\Phi^0(F),...,\Phi^{l_F}(F))_{(\Theta^0,...,\Theta^k)}$$ is equal to the subdivision sequence $$(\Phi^{0}(F - \{w_k\} \cup \{s_b\}),...,\Phi^{j_{F-\{w_k\} \cup \{s_b\}}}(F - \{w_k\} \cup \{s_b\}))_{(\Theta^0,...,\Theta^{k-1})}.$$
If $s_b \in F$ then the same statements hold with $s_a$ in place of $s_b$. Recalling that $lk_{\Theta^{k}}(F) = lk_{\Theta^{k-1}}(F - \{w_k\} \cup \{s_b\})$, we see that we are adopting the subdivision sequence of $lk_{\Theta^{k-1}}(F-\{w_k\} \cup \{s_b\})$.  \\ 

\item[(3)]  If $F \in \mathcal{F}_3$ then $l_F = j_{F -\{w_k\} \cup S}$, and $(\Phi^0(F),...,\Phi^{l_F}(F))_{(\Theta^0,...,\Theta^k)}$ is the suspension of the subdivision sequence $(\Phi^0(F-\{w_k\} \cup S),...,\Phi^{j_{F-\{w_k\} \cup S}}(F-\{w_k\} \cup S))_{(\Theta^0,...,\Theta^{k-1})}$, meaning that $\Phi^i(F) = \Phi^i(F - \{w_k\} \cup S)*\Sigma_0$. The vertices of $\Sigma_0$ are labeled $s_a$ and $s_b$. \\

\item[(4)] If $F \in \mathcal{F}_4$, then $l_F= j_{F} +1$, and the first $l_F-1$ simplicial complexes of $(\Phi^0(F),...,\Phi^{l}(F))_{(\Theta^0,...,\Theta^k)}$ are equal to the simplicial complexes of the induced subdivision sequence $(\Phi^0(F),...,\Phi^{j_F}(F))_{(\Theta^0,...,\Theta^{k-1})}$, and $\Phi^{l_F}(F)$ is the subdivision of $\Phi^{l_F-1}(F)$ in the edge $S$. Recall that in this case $lk_{\Theta^{k}}(F)$ is the subdivision of $lk_{\Theta^{k-1}}(F)$ in the edge $S$.\\

\item[(5)] If $F \in \mathcal{F}_5$, then $l_F=j_F$ and $(\Phi^0(F),...,\Phi^{l_F}(F))_{(\Theta^0,..,\Theta^{k})}$ is equal to the subdivision sequence $(\Phi^0(F),...,\Phi^{j_F}(F))_{(\Theta^0,..,\Theta^{k-1})}$.\\ 
\end{itemize}

When $F = \emptyset$ it is obvious by induction on $k$ that the induced subdivision sequence $(\Phi^0(\emptyset),...,\Phi^{k}(\emptyset))$ coincides with the subdivision sequence $(\Theta^0,...,\Theta^k)$, since $\emptyset$ is a face in $\mathcal{F}_4$.\\

Given the above induced subdivision sequence $(\Phi^0(F),...,\Phi^{l_F}(F))_{(\Theta^0,...,\Theta^k)}$ define the sets $$W_{(\Theta^0,...,\Theta^k)}(F): = V_{\Phi^{l_F}(F)} - V_{\Phi^0(F)}.$$
When the subdivision sequence is clear from the contex we denote this set by $W_{\Theta^k}(F)$. We label by $w_{1,F},w_{2,F},...,w_{l_F,F}$ the vertices of $W_{\Theta^k}(F)$ where $w_{i,F}$ for $i = 1,...,l$ is the unique vertex in $\Phi^i(F)$ that is not contained in $\Phi^{i-1}(F)$. With this notation we have $w_{j,\emptyset} = w_j$ for $j =1,2,...,k$. We order the sets $W_{\Theta^k}(F)$, $F \in \Theta^k$, by stipulating that if $i<j$ then $w_{i,F} < w_{j,F}$.\\  

\begin{prop}
Suppose $(\Theta^0,...,\Theta^k)$ is a subdivision sequence. For any face $F \in \Theta^k$, the set $W_{\Theta^k}(F)$ satisfies one of the following relations:\\

\begin{itemize}
\item[(1)] If $F \in \mathcal{F}_1$ and $s_a \in F$ then $W_{\Theta^k}(F)$ is equal to $W_{\Theta^{k-1}}(F)$ except $s_b$ is replaced by $w_k$ if $s_b \in W_{\Theta^{k-1}}(F)$. The ordering of the set $W_{\Theta^k}(F)$ is the same as the ordering of the set $W_{\Theta^{k-1}}(F)$ however the vertex $w_k$ takes the position of $s_b$ if $s_b \in W_{\Theta^{k-1}}(F)$. If $s_b \in F$ then the same statements hold with $s_a$ in place of $s_b$. \\

\item[(2)] If $F \in \mathcal{F}_2$ and $F$ contains $s_a$, then $W_{\Theta^k}(F) = W_{\Theta^{k-1}}(F-\{w_k\} \cup \{s_b\})$, and the ordering of the sets coincide.\\

\item[(3)] If $F \in \mathcal{F}_3$ then $W_{\Theta^k}(F) =W_{\Theta^{k-1}}(F-\{w_k\} \cup S),$ and the ordering of the sets coincide. \\  

\item[(4)] If $F \in \mathcal{F}_4$ then $W_{\Theta^k}(F) =W_{\Theta^{k-1}}(F) \cup \{w_k\}$, and the ordering of $W_{\Theta^k}(F)-\{w_k\}$ coincides with the ordering of $W_{\Theta^{k-1}}(F)$, and $w_k$ is last in the ordering.\\

\item[(5)] If $F \in \mathcal{F}_5$ then $W_{\Theta^k}(F) =W_{\Theta^{k-1}}(F)$, and the ordering of the sets coincide.\label{wsetsdefn}
\end{itemize}
\end{prop}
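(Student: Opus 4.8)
The plan is to read all five relations directly off the inductive definition of the induced subdivision sequence $(\Phi^0(F),\dots,\Phi^{l_F}(F))_{(\Theta^0,\dots,\Theta^k)}$, using only the equality $W_{\Theta^k}(F)=V_{\Phi^{l_F}(F)}-V_{\Phi^0(F)}$ together with the fact that $V_{\Phi^0(F)}\subseteq V_{\Phi^{l_F}(F)}$ (a subdivision sequence only adds vertices). No induction on $k$ beyond what was already used to set up the definitions is needed: the statement is a single step, and in each of the five cases the definition already expresses the induced sequence of $F$ relative to $(\Theta^0,\dots,\Theta^k)$ in terms of the induced sequence of a prescribed face relative to $(\Theta^0,\dots,\Theta^{k-1})$, so one just transports vertex sets and their orderings through that description.

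For cases (2) and (5) I would note that there the induced sequence of $F$ relative to $(\Theta^0,\dots,\Theta^k)$ is, by definition, the very same sequence of complexes --- with the same vertex labels and the same ordering of new vertices --- as the induced sequence of $F$, respectively of $F-\{w_k\}\cup\{s_b\}$, relative to $(\Theta^0,\dots,\Theta^{k-1})$; hence $V_{\Phi^0}$, $V_{\Phi^{l_F}}$, their difference, and its ordering are all unchanged. For case (4) I would observe that the induced sequence of $F$ is obtained from the $(\Theta^0,\dots,\Theta^{k-1})$-sequence of $F$ by appending one further subdivision, of $\Phi^{l_F-1}(F)=lk_{\Theta^{k-1}}(F)$ in the edge $S$; since $\Phi^{l_F}(F)$ must equal $lk_{\Theta^k}(F)$, a subcomplex of $\Theta^k$ containing $w_k$ (this being the defining condition of $\mathcal{F}_4$) and whose only vertex outside $\Theta^{k-1}$ is $w_k$, the new vertex is $w_k$. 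Thus $V_{\Phi^0(F)}$ is unchanged and $V_{\Phi^{l_F}(F)}=V_{lk_{\Theta^{k-1}}(F)}\cup\{w_k\}$, which gives $W_{\Theta^k}(F)=W_{\Theta^{k-1}}(F)\cup\{w_k\}$ with $w_k$ last in the ordering. For case (3) I would use that the induced sequence of $F$ is the suspension of the $(\Theta^0,\dots,\Theta^{k-1})$-sequence of $F-\{w_k\}\cup S$, i.e.\ each term is joined with $\Sigma_0$ on the two vertices $s_a,s_b$; since $s_a,s_b$ lie in $F-\{w_k\}\cup S$ they do not occur in $lk_{\Theta^{k-1}}(F-\{w_k\}\cup S)$, nor in any earlier term of that sequence (a quick check: those terms are links and iterated edge subdivisions of links of faces containing $S$, and an edge subdivision introduces only the new vertices $w_j$, never $s_a$ or $s_b$), so adjoining $\{s_a,s_b\}$ to every vertex set leaves the difference $V_{\Phi^{l_F}(F)}-V_{\Phi^0(F)}$ and its ordering unchanged, equal to $W_{\Theta^{k-1}}(F-\{w_k\}\cup S)$.

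The only case needing a genuine argument will be (1). Assuming $s_a\in F$ (the case $s_b\in F$ being symmetric), the definition gives the induced sequence of $F$ relative to $(\Theta^0,\dots,\Theta^k)$ as the image of the one relative to $(\Theta^0,\dots,\Theta^{k-1})$ under the vertex map that is the identity except that $w_k$ replaces $s_a$ or $s_b$ wherever it occurs. I would first note that, because $s_a\in F$, the vertex $s_a$ occurs in none of the complexes $\Phi^i(F)$ (their vertex sets are contained in $V_{lk_{\Theta^{k-1}}(F)}$, which omits $s_a$), so this map restricts to the injection $\sigma\colon V_{\Theta^{k-1}}\to V_{\Theta^k}$ fixing everything except $s_b\mapsto w_k$ --- injective since its only non-fixed value is the new vertex $w_k\notin V_{\Theta^{k-1}}$. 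Injectivity lets $\sigma$ commute with set difference and with ``the $i$-th new vertex'' operation, so $W_{\Theta^k}(F)=\sigma\bigl(V_{\Phi^{l_F}(F)}-V_{\Phi^0(F)}\bigr)=\sigma\bigl(W_{\Theta^{k-1}}(F)\bigr)$ as ordered sets, which is exactly $W_{\Theta^{k-1}}(F)$ with $s_b$ replaced by $w_k$ when $s_b\in W_{\Theta^{k-1}}(F)$, the new vertex $w_k$ inheriting $s_b$'s position in the order. This is the asserted relation. I do not expect any real obstacle: the two points to watch are that in case (1) the relabeling touches at most the single vertex $s_b$ (or $s_a$) and does so injectively, and that in case (3) the suspension vertices $s_a,s_b$ are genuinely new to the relevant complexes --- both following immediately from $s_a,s_b$ lying in the face whose link is being taken.
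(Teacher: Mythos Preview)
Your proposal is correct and follows essentially the same approach as the paper, which dispatches the proposition in a single sentence: ``This can be proven easily by induction on $k$, using the definition of the induced subdivision sequence.'' Your write-up simply unpacks that sentence case by case; the only difference is one of detail, and your remark that no genuine induction is needed (the statement compares level $k$ to level $k-1$ and so is a single step read off the recursive definition) is a fair sharpening of the paper's phrasing.
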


\begin{proof}
This can be proven easily by induction on $k$, using the definition of the induced subdivision sequence. 
\end{proof}

Let $(\Theta^0,...,\Theta^k)$ be a subdivision sequence where $\Theta^0 = \Sigma_{d-1}$. For any face $F \in \Theta^k$ we define a set of vertices 

$$K_{(\Theta^0,...\Theta^k)}(F).$$ This is abbreviated to $K_{\Theta^k}(F)$ when the subdivision sequence is clear from the context. We let

$$K_{\Theta^k}(F): = \bigcap_{v \in F}K_{\Theta^k}(\{v\}),$$ and for any vertex $v \in \Theta^k$ we define $K_{\Theta^k}(\{v\})$ inductively as follows:\\   
If $k=0$ so that $\{v\} \in \Sigma_{d-1}$, then $K_{\Sigma_{d-1}}(\{v\}) = \emptyset$ for all $\{v\} \in \Sigma_{d-1}$. If $k \ge 1$ then $K_{\Theta^k}(\{v\})$ is given by:\\

\begin{itemize}
\item[(1)] If $\{v\} \in \mathcal{F}_1$ (i.e. $v= s_a$ or $s_b$) or if $\{v\} \in \mathcal{F}_5$ (i.e. $v \not \in \{s_a,s_b,w_k\}$ and $\{v\} \not \in lk_{\Theta^k}(\{w_k\})$) then $K_{\Theta^k}(\{v\}) = K_{\Theta^{k-1}}(\{v\})$.\\

\item[(2)] If $\{v\} \in \mathcal{F}_3$ (i.e. $v =w_k$) then $K_{\Theta^k}(\{w_k\}) = K_{\Theta^{k-1}}(\{s_a\})\cap K_{\Theta^{k-1}}(\{s_b\})$.\\

\item[(3)] If $\{v\} \in \mathcal{F}_4$ (i.e. $v \not \in \{s_a,s_b,w_k\}$ and $\{v\} \in lk_{\Theta^k}(\{w_k\})$) then $K_{\Theta^k}(\{v\}) = K_{\Theta^{k-1}}(\{v\}) \cup\{w_k\}$.\\

\end{itemize}

We can also give an inductive definition of $K_{\Theta^k}(F)$.
\begin{prop}
For any face $F \in \Theta^k$ we have 
\begin{itemize}
\item[(1)] If $F \in \mathcal{F}_1$ then $K_{\Theta^k}(F) = K_{\Theta^{k-1}}(F)$.\\

\item[(2)] If $F \in \mathcal{F}_2$ and $s_a \in F$ then $K_{\Theta^k}(F) = K_{\Theta^{k-1}}(F -\{w_k\} \cup \{s_b\})$ (by symmetry the same statement hold with $s_a$ and $s_b$ swapped).\\

\item[(3)] If $F \in \mathcal{F}_3$ then $K_{\Theta^k}(F) = K_{\Theta^{k-1}}(F -\{w_k\} \cup S)$.\\

\item[(4)] If $F \in \mathcal{F}_4$ then $K_{\Theta^k}(F) = K_{\Theta^{k-1}}(F) \cup\{w_k\}$. \\

\item[(5)] If $F \in \mathcal{F}_5$ then $K_{\Theta^k}(F)= K_{\Theta^{k-1}}(F)$. \label{ksetsdefn}
\end{itemize}

\end{prop}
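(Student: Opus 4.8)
The plan is to prove each of the five cases directly from the definitions; no extra induction on $k$ is required, since the induction is already built into the single-vertex recursion defining $K_{\Theta^k}(\{v\})$. The one algebraic fact I will use over and over is the distributive identity
$$\bigcap_{v\in F}\bigl(A_v\cup B\bigr)=\Bigl(\bigcap_{v\in F}A_v\Bigr)\cup B$$
(valid for any nonempty family $\{A_v\}_{v\in F}$ and any set $B$), together with the trivial but crucial observation that $w_k\in V_{\Theta^k}\setminus V_{\Theta^{k-1}}$, so $w_k\notin K_{\Theta^{k-1}}(\{v\})$ for every vertex $v$ of $\Theta^{k-1}$. Throughout, $S=\{s_a,s_b\}$ is the last edge subdivided.

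I would first treat the three cases $F\in\mathcal F_2,\mathcal F_3,\mathcal F_4$, in which $F$ ``sees'' $w_k$. The preliminary remark here is that if $v\in F$ with $v\notin\{s_a,s_b,w_k\}$, then $\{v,w_k\}$ lies in $F\cup\{w_k\}\in\Theta^k$ (directly in $\mathcal F_2$ and $\mathcal F_3$, and because $\{w_k\}\in lk_{\Theta^k}(F)$ in $\mathcal F_4$), hence $\{v\}\in\mathcal F_4$ and $K_{\Theta^k}(\{v\})=K_{\Theta^{k-1}}(\{v\})\cup\{w_k\}$ by the single-vertex rule. Expanding $K_{\Theta^k}(F)=\bigcap_{v\in F}K_{\Theta^k}(\{v\})$ and substituting the relevant single-vertex rule for each $v$ — rule~(1) for $v\in\{s_a,s_b\}$, rule~(2) for $v=w_k$, rule~(3) for the rest — the terms carrying $\{w_k\}$ are pulled out by the distributive identity. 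In $\mathcal F_2$ and $\mathcal F_3$ a $w_k$-free factor $K_{\Theta^{k-1}}(\{s_a\})$ is present (coming from rule~(1) on $s_a$ in $\mathcal F_2$, and from rule~(2) on $w_k$ in both cases), and intersecting against it deletes $\{w_k\}$, leaving $K_{\Theta^{k-1}}(F-\{w_k\}\cup\{s_b\})$ and $K_{\Theta^{k-1}}(F-\{w_k\}\cup S)$ respectively; in $\mathcal F_4$ there is no such factor, and the identity leaves $\{w_k\}$ in place, giving $K_{\Theta^{k-1}}(F)\cup\{w_k\}$. (The degenerate case $F=\emptyset$ belongs to $\mathcal F_4$ and the assertion is then just $V_{\Theta^k}=V_{\Theta^{k-1}}\cup\{w_k\}$. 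I will also note, as is implicit in the link identities (2) and (3), that $F-\{w_k\}\cup\{s_b\}$ and $F-\{w_k\}\cup S$ are faces of $\Theta^{k-1}$, so the answers have the expected form.)

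For $F\in\mathcal F_1$ and $F\in\mathcal F_5$, where $w_k\notin F$ (so the single-vertex rule~(2) never applies and each $v\in F$ satisfies $K_{\Theta^{k-1}}(\{v\})\subseteq K_{\Theta^k}(\{v\})\subseteq K_{\Theta^{k-1}}(\{v\})\cup\{w_k\}$), it suffices to exhibit one vertex $v_0\in F$ with $K_{\Theta^k}(\{v_0\})=K_{\Theta^{k-1}}(\{v_0\})$: the first inclusion gives $K_{\Theta^k}(F)\supseteq K_{\Theta^{k-1}}(F)$, and intersecting the upper bounds (via the distributive identity) against the $w_k$-free set $K_{\Theta^{k-1}}(\{v_0\})$ kills $\{w_k\}$ and gives the reverse inclusion. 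In $\mathcal F_1$ I take $v_0$ to be whichever of $s_a,s_b$ belongs to $F$, which is in $\mathcal F_1$ by definition. The case $\mathcal F_5$ is the only place where a real idea is needed: I would use that $\Theta^k$ is flag (Corollary~\ref{corflag}) — if every vertex of $F$ were adjacent to $w_k$, then $F\cup\{w_k\}$ would be a clique of $\Theta^k$, hence a face, so $\{w_k\}\in lk_{\Theta^k}(F)$, contradicting $F\in\mathcal F_5$; hence some $v_0\in F$ is not adjacent to $w_k$, i.e. $\{v_0\}\in\mathcal F_5$, so $K_{\Theta^k}(\{v_0\})=K_{\Theta^{k-1}}(\{v_0\})$.

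The main obstacle is really just careful bookkeeping: in each of the five cases, tracking which single-vertex rule applies to each $v\in F$ and checking that the new vertex $w_k$ is absorbed or eliminated exactly as claimed. The only genuinely non-formal ingredient is the appeal to flagness of $\Theta^k$ in case $F\in\mathcal F_5$, which is what supplies a $w_k$-free factor in the intersection.
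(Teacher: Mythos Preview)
Your proposal is correct and follows essentially the same approach as the paper: both arguments expand $K_{\Theta^k}(F)=\bigcap_{v\in F}K_{\Theta^k}(\{v\})$, apply the single-vertex recursion to each factor, and observe that the extra $\{w_k\}$ either survives (case~$\mathcal F_4$) or is killed by a $w_k$-free factor (the other four cases). Your write-up is in fact a bit more careful than the paper's in one place: in case~$\mathcal F_5$ you explicitly invoke flagness of $\Theta^k$ (Corollary~\ref{corflag}) to produce a vertex $v_0\in F$ with $\{v_0\}\in\mathcal F_5$, whereas the paper simply asserts that such a vertex exists.
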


\begin{proof}
We show that the claim holds in each of the five cases for $F \in \Theta^k$. \\

\begin{itemize}
\item[(1)] If $F \in \mathcal{F}_1$ and $s_a \in F$ then for any $w \in F$ either $K_{\Theta^k}(\{w\}) = K_{\Theta^{k-1}}(\{w\})$ or $K_{\Theta^k}(\{w\}) = K_{\Theta^{k-1}}(\{w\}) \cup \{w_k\}$. Also, $w_k \not \in K_{\Theta^k}(\{s_a\} )$. Therefore $K_{\Theta^k}(F) = K_{\Theta^{k-1}}(F)$. By symmetry the claim holds in this case when $s_b \in F$.  \\

\item[(2)] If $F \in \mathcal{F}_2$ and $s_a \in F$ then 
\begin{align*}
K_{\Theta^k}(F) &=\left(\bigcap_{w \in F - \{s_a,w_k\}}K_{\Theta^k}(\{w\})\right) \cap K_{\Theta^k}(\{w_k\})\cap K_{\Theta^k}(\{s_a\})\\
&= \left(\bigcap_{w \in F - \{s_a,w_k\}}K_{\Theta^{k-1}}(\{w\})\right) \cap K_{\Theta^{k-1}}(\{s_a\})\cap K_{\Theta^{k-1}}(\{s_b\})\\
&=\bigcap_{w \in F-\{w_k\} \cup \{s_b\}}K_{\Theta^{k-1}}(\{w\}).
\end{align*}
The second equality uses the fact that for any $w \in F - \{s_a,w_k\}$ we have $K_{\Theta^k}(\{w\}) = K_{\Theta^{k-1}}(\{w\})$ or $K_{\Theta^k}(\{w\}) = K_{\Theta^{k-1}}(\{w\}) \cup \{w_k\}$ yet $w_k \not \in K_{\Theta^{k}}(\{w_k\})$. By symmetry the claim holds when $s_b \in F$.\\  

\item[(3)] If $F \in \mathcal{F}_3$ then
\begin{align*}
K_{\Theta^k}(F) &= \left(\bigcap_{w \in F-\{w_k\}}K_{\Theta^k}(\{w\})\right) \cap K_{\Theta^k}(\{w_k\})\\
&=\left(\bigcap_{w \in F - \{w_k\}}K_{\Theta^{k-1}}(\{w\})\right) \cap K_{\Theta^{k-1}}(\{s_a\}) \cap K_{\Theta^{k-1}}(\{s_b\})\\
&=\bigcap_{w \in F - \{w_k\} \cup S}K_{\Theta^{k-1}}(\{w\})\\
&=K_{\Theta^{k-1}}(F - \{w_k\} \cup S).
\end{align*}

The second equality uses the fact that for all $w \in F - \{w_k\}$ we have $K_{\Theta^k}(\{w\}) = K_{\Theta^{k-1}}(\{w\}) \cup \{w_k\}$, and that $w_k \not \in K_{\Theta^{k}}(\{w_k\})$. \\

\item[(4)] If $F \in \mathcal{F}_4$ then every vertex $w \in F$ is adjacent to $w_k$ and not equal to $s_a$ or $s_b$, so $K_{\Theta^k}(\{w\})$ is the union of  
$K_{\Theta^{k-1}}(\{w\})$ and $\{w_k\}$. Taking the intersection over all vertices $w$ of $F$ gives the claim immediately.\\ 

\item[(5)] If $F \in \mathcal{F}_5$ then there is some vertex $w \in F$ that is not adjacent to both $s_a$ and $s_b$ so that $w_k \not \in K_{\Theta^k}(F)$. Since for every vertex $w \in F$ either $K_{\Theta^k}(\{w\}) = K_{\Theta^{k-1}}(\{w\})$ or $K_{\Theta^k}(\{w\}) = K_{\Theta^{k-1}}(\{w\}) \cup \{w_k\}$ the claim clearly holds in this case. 
\end{itemize}
\end{proof}

\begin{claim}
Given a subdivision sequence $(\Theta^0,...,\Theta^k)$, for any face $F \in \Theta^k$ we have $$|K_{\Theta^k}(F)| =|W_{\Theta^k}(F)|.$$ \label{bijecclaim}
\end{claim}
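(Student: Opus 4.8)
The plan is to prove the claim by induction on $k$, running the induction in parallel over all faces $F \in \Theta^k$ at once. The base case $k=0$ is immediate: for $\{v\} \in \Sigma_{d-1}$ we have $K_{\Sigma_{d-1}}(\{v\}) = \emptyset$, hence $K_{\Sigma_{d-1}}(F) = \emptyset$ for every face $F$, while $W_{\Sigma_{d-1}}(F) = V_{\Phi^0(F)} - V_{\Phi^0(F)} = \emptyset$ since the induced subdivision sequence of $F$ in $(\Theta^0)$ has no subdivisions. So both sets are empty and the cardinalities agree.

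For the inductive step, assume $|K_{\Theta^{k-1}}(G)| = |W_{\Theta^{k-1}}(G)|$ for every face $G \in \Theta^{k-1}$, and let $F \in \Theta^k$. The strategy is simply to compare Proposition \ref{ksetsdefn} with Proposition \ref{wsetsdefn} case by case according to which of $\mathcal{F}_1,\ldots,\mathcal{F}_5$ contains $F$. In case (1) we have $K_{\Theta^k}(F) = K_{\Theta^{k-1}}(F)$ and $W_{\Theta^k}(F)$ is obtained from $W_{\Theta^{k-1}}(F)$ by at most renaming $s_a$ or $s_b$ to $w_k$, so the cardinalities are unchanged and the induction hypothesis applied to $F \in \Theta^{k-1}$ finishes it. In cases (2), (3) and (5) both $K_{\Theta^k}(F)$ and $W_{\Theta^k}(F)$ are literally equal to the corresponding set of a face of $\Theta^{k-1}$ (namely $F - \{w_k\} \cup \{s_b\}$, $F - \{w_k\} \cup S$, or $F$ respectively), so again we just invoke the induction hypothesis for that face. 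In case (4), $F \in \mathcal{F}_4$, Proposition \ref{ksetsdefn}(4) gives $K_{\Theta^k}(F) = K_{\Theta^{k-1}}(F) \cup \{w_k\}$ and Proposition \ref{wsetsdefn}(4) gives $W_{\Theta^k}(F) = W_{\Theta^{k-1}}(F) \cup \{w_k\}$; here one must check that $w_k$ is genuinely new to each set, i.e. $w_k \notin K_{\Theta^{k-1}}(F)$ and $w_k \notin W_{\Theta^{k-1}}(F)$. Both are clear since $w_k \notin V_{\Theta^{k-1}}$, and $K_{\Theta^{k-1}}(F), W_{\Theta^{k-1}}(F) \subseteq V_{\Theta^{k-1}}$. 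Hence each side gains exactly one element and $|K_{\Theta^k}(F)| = |K_{\Theta^{k-1}}(F)| + 1 = |W_{\Theta^{k-1}}(F)| + 1 = |W_{\Theta^k}(F)|$.

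The only mild subtlety, and the step I expect to require the most care, is bookkeeping in cases (1)–(3): one must make sure the face of $\Theta^{k-1}$ to which we apply the induction hypothesis for $K$ is the \emph{same} face to which Proposition \ref{wsetsdefn} refers for $W$. Inspecting the two propositions, the faces match in every case — $F$ in cases (1) and (5), $F - \{w_k\} \cup \{s_b\}$ (or its $s_a$-analogue) in case (2), and $F - \{w_k\} \cup S$ in case (3) — so this is really just a matter of reading off the statements, and there is no genuine obstacle. Since the five cases are exhaustive and mutually exclusive for $F \in \Theta^k$, this completes the induction and proves that $|K_{\Theta^k}(F)| = |W_{\Theta^k}(F)|$ for all $F \in \Theta^k$.
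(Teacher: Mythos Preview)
Your proof is correct and follows exactly the approach of the paper: the paper's proof simply says the claim is clear by induction, noting that the recursive rules of Propositions~\ref{wsetsdefn} and~\ref{ksetsdefn} preserve the equality of cardinalities. Your argument is just a careful case-by-case elaboration of this, including the (easy but worthwhile) verification in case~(4) that $w_k$ is genuinely new to both sets.
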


\begin{proof}
This is clear by induction noting that in the recursive rules of Propositions \ref{wsetsdefn} and \ref{ksetsdefn} this property is maintained.

\end{proof}
Given a subdivision sequence $(\Theta^0,...,\Theta^k)$, for any $F \in \Theta^k$, the set $K_{\Theta^k}(F)$ is a subset of $W_{\Theta^k}(\emptyset) = V_{\Theta^k} - V_{\Theta^0} = \{w_1,...,w_k\}.$ We define an ordering on the set $K_{\Theta^k}(F)$ where for any $w_i,w_j \in K_{\Theta^k}(F)$ we stipulate that if $i<j$ then $w_i < w_j$. Since Claim \ref{bijecclaim} holds, for any face $F \in \Theta^k$ we define the following order preserving bijection 

$$\phi_{\Theta^k,F}: K_{\Theta^k}(F) \rightarrow W_{\Theta^k}(F).$$

\noindent In the case where $F =\emptyset$ this is the identity map $w_i \mapsto w_{i,\emptyset}$. \\

 the same as the choice in $lk_{\Delta}(F) - \{s\}) \cup \{v_2\}$ (respectivley $lk_{\Delta}(F) - \{s\}) \cup \{v_1\}$) to obtain $W_{\Delta}(F)$.\\

\begin{defn}
Given a subdivision sequence $(\Theta^0,...,\Theta^k)$ define a flag simplicial complex $$\Gamma(\Theta^0,...,\Theta^k)$$ on the vertex set $\{w_1,...,w_k\}$, where the condition for $w_a$ to be adjacent to $w_b$ (for $a<b$) is that $w_a$ belongs to $K_{(\Theta^0,...,\Theta^b)}(\{w_b\})$. (When the subdivision sequence is clear we abbreviate this to $\Gamma(\Theta^k)$). \label{maindefn}
\end{defn}

\begin{example}
Take $\Sigma_{3}$, and label the vertices by $\{\pm \epsilon_1, \pm \epsilon_2, \pm \epsilon_3, \pm \epsilon_4\}$, where $\pm \epsilon_i$ for $i=1,2,3,4$ are a pair of non adjacent vertices. Let the subdivision sequence $(\Sigma_3,\Theta^1,\Theta^2, \Theta^3)$ be obtained by: \\

\noindent Step 1: subdivide the edge $\{\epsilon_1,\epsilon_2\}$, to obtain the new vertex $w_1$.\\
Step 2: subdivide the edge $\{\epsilon_3,\epsilon_4\}$, to obtain the new vertex $w_2$.\\
Step 3: subdivide the edge $\{\epsilon_1,w_2\}$ to obtain the new vertex $w_3$.\\

\noindent Then $K_{\Theta^1}(\{\pm \epsilon_3\}) = K_{\Theta^1}(\{\pm \epsilon_4\}) = \{w_1\}$, and in the other cases $K_{\Theta^1}(\{v\}) = \emptyset$.\\
$K_{\Theta^2}(\{\pm \epsilon_1\}) =K_{\Theta^2}(\{ \pm \epsilon_2\})= K_{\Theta^2}(\{w_1\})=\{w_2\}$, and $K_{\Theta^2}(\{\pm \epsilon_3\}) = K_{\Theta^2}(\{\pm \epsilon_4\}) = K_{\Theta^2}(w_2) =\{w_1\}.$\\
Finally, $K_{\Theta^3}(\{-\epsilon_3\}) = K_{\Theta^3}(\{-\epsilon_4\})=K_{\Theta^3}(\{w_2\}) =\{w_1\}$, $K_{\Theta^3}(\{\pm \epsilon_1\})=\{w_2\}$, $K_{\Theta^3}(\{\epsilon_3\}) = K_{\Theta^3}(\{\epsilon_4\}) = \{w_1,w_3\}$, $K_{\Theta^3}(\{\pm \epsilon_2\}) = K_{\Theta^3}(\{w_1\})=\{w_2,w_3\}$, and $K_{\Theta^3}(\{w_3\}) = \emptyset$.\\

Hence $\Gamma(\Sigma_3,\Theta^1,\Theta^2,\Theta^3)$ is the simplicial complex illustrated in Figure \ref{examplefig}.\\

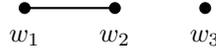
\begin{figure}[H]
\caption{The simplicial complex $\Gamma(\Sigma_3,\Theta^1,\Theta^2, \Theta^3)$.}
\label{examplefig}
\[
\psset{unit=0.8cm}
\begin{pspicture}(5,2)(5.5,3.5)
\qdisk(3,2.5){2.25pt}\qdisk(4.5,2.5){2.25pt}\qdisk(6,2.5){2.25pt}
\rput(3,2){$w_1$}\rput(4.5,2){$w_2$}\rput(6,2){$w_3$}
\psline(3,2.5)(4.5,2.5)
\end{pspicture}
\]
\end{figure}
Note that in this example $K_{\Theta^3}(\{\epsilon_1\}) = \{w_2\}$, whereas $W_{\Theta^3}(\{\epsilon_1\}) = \{w_3\}.$

\end{example}

\end{section}

\begin{section}{The main theorem}

The goal of this section is to prove:
\begin{thm}
For any subdivision sequence  $(\Theta^0,...,\Theta^k)$, $$f(\Gamma(\Theta^0,...,\Theta^k))= \gamma(\Theta^k).$$ \label{bigthm}
\end{thm}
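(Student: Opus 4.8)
The plan is to prove Theorem \ref{bigthm} by induction on $k$, the number of subdivisions. The base case $k=0$ is trivial: $\Gamma(\Theta^0)$ is the empty simplicial complex on no vertices, so $f(\Gamma(\Theta^0)) = 1 = \gamma(\Sigma_{d-1})$ (the cross polytope boundary has $\gamma$-polynomial equal to $1$). For the inductive step, suppose the result holds for all subdivision sequences of length $k-1$ (and all dimensions), and let $(\Theta^0,\dots,\Theta^k)$ be obtained from $(\Theta^0,\dots,\Theta^{k-1})$ by subdividing the edge $S = \{s_a,s_b\}$ of $\Theta^{k-1}$, producing $w_k$. By Lemma \ref{sdlink},
$$\gamma(\Theta^k) - \gamma(\Theta^{k-1}) = t\,\gamma(lk_{\Theta^{k-1}}(S)).$$
So what I need to show is two things: first, that $f(\Gamma(\Theta^k)) - f(\Gamma(\Theta^{k-1})) = t\, f(\Gamma'(S))$ where $\Gamma'(S)$ is the simplicial complex assigned by the \emph{induced} subdivision sequence $(\Phi^0(S),\dots,\Phi^{l_S}(S))$ to the link $lk_{\Theta^{k-1}}(S)$; and second, that $\gamma(lk_{\Theta^{k-1}}(S)) = f(\Gamma(\Phi^0(S),\dots,\Phi^{l_S}(S)))$, which is exactly the inductive hypothesis applied to the induced subdivision sequence of the link (of length $l_S \le k-1$), \emph{provided} I have set things up so that $\Gamma$ of an induced subdivision sequence behaves coherently.

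The combinatorial heart of the matter is the structure of $\Gamma(\Theta^k)$ relative to $\Gamma(\Theta^{k-1})$. By Definition \ref{maindefn}, $\Gamma(\Theta^k)$ has vertex set $\{w_1,\dots,w_k\}$, one more vertex than $\Gamma(\Theta^{k-1})$. Since $\{v\} \in \mathcal{F}_4$ (i.e. $v$ adjacent to $w_k$ in $\Theta^k$, $v \notin S$) iff $w_k \in K_{\Theta^k}(\{v\})$, and since for $a<b<k$ the adjacency of $w_a,w_b$ in $\Gamma(\Theta^k)$ is determined by $K_{(\Theta^0,\dots,\Theta^b)}(\{w_b\})$ which does not depend on $\Theta^k$, the restriction of $\Gamma(\Theta^k)$ to $\{w_1,\dots,w_{k-1}\}$ is exactly $\Gamma(\Theta^{k-1})$. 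Therefore $\Gamma(\Theta^k)$ is obtained from $\Gamma(\Theta^{k-1})$ by adjoining a single new vertex $w_k$ whose neighbourhood is $K_{\Theta^k}(\{w_k\}) = K_{\Theta^{k-1}}(\{s_a\}) \cap K_{\Theta^{k-1}}(\{s_b\})$ (using the $\mathcal{F}_3$ rule in the definition of $K$). Because $\Gamma(\Theta^k)$ is flag, the new faces are precisely $\{w_k\} \cup G$ where $G$ is a face of the induced subcomplex of $\Gamma(\Theta^{k-1})$ on the vertex set $K_{\Theta^{k-1}}(\{s_a\}) \cap K_{\Theta^{k-1}}(\{s_b\})$, i.e. the link of that vertex set. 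Hence
$$f(\Gamma(\Theta^k)) - f(\Gamma(\Theta^{k-1})) = t\cdot f\bigl(lk_{\Gamma(\Theta^{k-1})}(\,\cdot\,)\bigr)$$
where the link is taken over the face corresponding to $K_{\Theta^{k-1}}(S) = K_{\Theta^{k-1}}(\{s_a\}) \cap K_{\Theta^{k-1}}(\{s_b\})$ — wait, one must be careful that $K_{\Theta^{k-1}}(S)$ really is a face of $\Gamma(\Theta^{k-1})$; this needs to be checked, and is presumably where an auxiliary lemma (not in the excerpt) identifying $\Gamma(lk_{\Theta^k}(F))$ with $lk_{\Gamma(\Theta^k)}(\text{something})$ via the bijection $\phi_{\Theta^k,F}$ comes in.

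So the key lemma I would isolate and prove (by the same induction on $k$, running in parallel) is: for every face $F \in \Theta^k$, the complex $\Gamma$ of the induced subdivision sequence $(\Phi^0(F),\dots,\Phi^{l_F}(F))$ is isomorphic, via $\phi_{\Theta^k,F}$ on vertex sets, to $lk_{\Gamma(\Theta^k)}(K_{\Theta^k}(F))$; in particular $K_{\Theta^k}(F)$ is a face of $\Gamma(\Theta^k)$. Granting this, applying it to $F = S$ in $\Theta^{k-1}$ gives $f(lk_{\Gamma(\Theta^{k-1})}(K_{\Theta^{k-1}}(S))) = f(\Gamma(\Phi^0(S),\dots,\Phi^{l_S}(S))) = \gamma(lk_{\Theta^{k-1}}(S))$, the last equality by the inductive hypothesis applied to the induced sequence (which has length $l_S\le k-1$ and whose result is $lk_{\Theta^{k-1}}(S) \in sd(\Sigma_{d-3})$). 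Combining with the displayed $f$-vector computation and Lemma \ref{sdlink} closes the induction. The main obstacle I expect is the parallel induction establishing the $\Gamma$-of-link lemma: one must trace through all five cases $\mathcal{F}_1,\dots,\mathcal{F}_5$ of Proposition \ref{wsetsdefn} and Proposition \ref{ksetsdefn}, check that the recursive rules for $W_{\Theta^k}(F)$ and $K_{\Theta^k}(F)$ are compatible with the recursive description of the induced subdivision sequence, and verify that the adjacency relation defining $\Gamma$ is preserved under the bijection $\phi_{\Theta^k,F}$ in each case — in particular handling the $\mathcal{F}_3$ suspension case and the $\mathcal{F}_4$ case where $w_k$ enters $K_{\Theta^k}(F)$. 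The bookkeeping is the real work; the topological input is entirely contained in Lemma \ref{sdlink}.
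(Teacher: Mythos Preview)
Your overall strategy is exactly the one the paper uses: induct, apply Lemma \ref{sdlink}, and reduce to a structural lemma identifying $\Gamma$ of the induced subdivision sequence of $lk_{\Theta^{k-1}}(S)$ with a subcomplex of $\Gamma(\Theta^{k-1})$. You also correctly anticipate that this structural lemma is the real work and that it is proved by chasing the five cases $\mathcal{F}_1,\dots,\mathcal{F}_5$; in the paper this is Proposition \ref{bigproptwo}, with Proposition \ref{bigprop} supplying the compatibility of $\phi_{\Theta^k,F}$ with the $K$-sets that you mention.

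There is, however, one genuine misstep in how you formulate the key lemma. You write that $\Gamma$ of the induced sequence should be isomorphic to $lk_{\Gamma(\Theta^k)}(K_{\Theta^k}(F))$ and in particular that $K_{\Theta^k}(F)$ is a \emph{face} of $\Gamma(\Theta^k)$. This is false: in Example 3.4 of the paper, $K_{\Theta^3}(\{\epsilon_3\}) = \{w_1,w_3\}$, but $w_1$ and $w_3$ are not adjacent in $\Gamma(\Theta^3)$. The correct statement (Proposition \ref{bigproptwo}) is that $\Gamma$ of the induced sequence is isomorphic to the \emph{induced subcomplex} of $\Gamma(\Theta^k)$ on the vertex set $K_{\Theta^k}(F)$, via $\phi_{\Theta^k,F}$. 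You actually had this right in the sentence before (``a face of the induced subcomplex of $\Gamma(\Theta^{k-1})$ on the vertex set $K_{\Theta^{k-1}}(\{s_a\})\cap K_{\Theta^{k-1}}(\{s_b\})$''), but then drifted into calling it a link. The point is that the new faces of $\Gamma(\Theta^k)$ are $\{w_k\}\cup G$ with $G$ ranging over the induced subcomplex on the neighbours of $w_k$ (this is $lk_{\Gamma(\Theta^k)}(\{w_k\})$, the link of the single new \emph{vertex}, not of the set $K_{\Theta^{k-1}}(S)$), and that induced subcomplex is what Proposition \ref{bigproptwo} identifies with $\Gamma(\Phi^0(S),\dots,\Phi^{l_S}(S))$. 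Once you make this correction your argument goes through and matches the paper's.
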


In order to prove this theorem we first need to prove Propositions \ref{bigprop} and \ref{bigproptwo}.

\begin{prop}
Given a subdivision sequence $(\Theta^0,...,\Theta^k)$ and faces $F,~G \in \Theta^k$ such that $G \in lk_{\Theta^k}(F)$, we have that $K_{\Theta^k}(F \cup G) = K_{\Theta^k}(F) \cap K_{\Theta^k}(G)$ maps to $K_{lk_{\Theta^k}(F)}(G)$ under $\phi_{\Theta^k,F}$ ($K_{lk_{\Theta^k}(F)}(G)$ is defined using the induced subdivision sequence with result $lk_{\Theta^k}(F)$). \label{bigprop}
\end{prop}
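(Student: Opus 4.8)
The plan is to induct on the length $k$ of the subdivision sequence. When $k=0$ every set $K_{\Theta^0}(\cdot)$ is empty and every induced subdivision sequence is trivial, so both sides of every assertion are empty and there is nothing to check. For the inductive step write $S=\{s_a,s_b\}$ for the $k$-th edge subdivided and $w_k$ for the new vertex. The equality $K_{\Theta^k}(F\cup G)=K_{\Theta^k}(F)\cap K_{\Theta^k}(G)$ is immediate: by definition $K_{\Theta^k}(H)=\bigcap_{v\in H}K_{\Theta^k}(\{v\})$, and the vertex set of $F\cup G$ is the disjoint union of those of $F$ and $G$ because $G\in lk_{\Theta^k}(F)$. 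So the real content is that $\phi_{\Theta^k,F}$ carries $K_{\Theta^k}(F\cup G)$ onto $K_{lk_{\Theta^k}(F)}(G)$.

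I would prove this by splitting into the five cases for the position of $F$ among $\mathcal{F}_1,\dots,\mathcal{F}_5$. In each case there is a reduced face $\overline{F}\in\Theta^{k-1}$ --- namely $\overline{F}=F$ when $F\in\mathcal{F}_1,\mathcal{F}_4,\mathcal{F}_5$, $\overline{F}=F-\{w_k\}\cup\{s_b\}$ (or $F-\{w_k\}\cup\{s_a\}$) when $F\in\mathcal{F}_2$, and $\overline{F}=F-\{w_k\}\cup S$ when $F\in\mathcal{F}_3$ --- such that, by the recursive definition of the induced subdivision sequence together with Propositions \ref{wsetsdefn} and \ref{ksetsdefn}, the sets $K_{\Theta^k}(F)$, $W_{\Theta^k}(F)$, their orderings, and the induced subdivision sequence $(\Phi^0(F),\dots,\Phi^{l_F}(F))_{(\Theta^0,\dots,\Theta^k)}$ are identified with the corresponding data of $\overline{F}$ in $(\Theta^0,\dots,\Theta^{k-1})$ --- after appending one further subdivision in the edge $S$ when $F\in\mathcal{F}_4$, and after the suspension $\Phi^i(F)=\Phi^i(\overline{F})*\Sigma_0$ when $F\in\mathcal{F}_3$. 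In the same way $G\in lk_{\Theta^k}(F)$ reduces to a face $\overline{G}\in lk_{\Theta^{k-1}}(\overline{F})$, and one checks that these reductions are compatible: $\overline{F\cup G}=\overline{F}\cup\overline{G}$, $\overline{G}\in lk_{\Theta^{k-1}}(\overline{F})$, and (in the $\mathcal{F}_4$ case) the $\mathcal{F}_i$-type of $F\cup G$ with respect to the $k$-th subdivision agrees with the $\mathcal{F}_i$-type of $G$ with respect to the appended subdivision of the induced sequence of $F$. Applying the inductive hypothesis to $\overline{F},\overline{G}$ in $(\Theta^0,\dots,\Theta^{k-1})$ and then pushing that identity back through Propositions \ref{wsetsdefn} and \ref{ksetsdefn} --- the latter applied also to the induced subdivision sequence of $F$ when $F\in\mathcal{F}_4$ --- recovers the claim.

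I expect the two cases $F\in\mathcal{F}_4$ and $F\in\mathcal{F}_3$ to carry all the difficulty. When $F\in\mathcal{F}_4$ the link $lk_{\Theta^k}(F)$ is itself the stellar subdivision of $lk_{\Theta^{k-1}}(F)$ in $S$, with $w_k$ as its new vertex, so $G$ must be sorted into its own five subcases inside the link, and the point to verify is that $w_k\in K_{\Theta^k}(F\cup G)$ holds precisely when $w_k\in K_{lk_{\Theta^k}(F)}(G)$; this amounts to checking that the two instances of Proposition \ref{ksetsdefn} agree case by case on $F\cup G$, and it is the main obstacle. When $F\in\mathcal{F}_3$ the induced sequence of $F$ is a suspension by $\Sigma_0=\{s_a,s_b\}$, and since that suspended sequence subdivides only edges lying in $lk_{\Theta^{k-1}}(\overline{F})$, the vertices $s_a$ and $s_b$ lie in $\mathcal{F}_4$ at every step; one then needs the auxiliary facts that $K_{A*\Sigma_0}(G')=K_A(G')$ for a face $G'$ of $A$, and that $K_{A*\Sigma_0}(\{s_a\})$ and $K_{A*\Sigma_0}(\{s_b\})$ both equal the full set of new vertices of the suspended sequence, so that adjoining $s_a$ or $s_b$ to $G'$ leaves the relevant intersection unchanged; both follow by a short induction from the recursive rules for $K$, starting from $K(\cdot)=\emptyset$ on the boundary of a cross polytope. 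The remaining cases $\mathcal{F}_1,\mathcal{F}_2,\mathcal{F}_5$ are direct, since there $lk_{\Theta^k}(F)$, its induced subdivision sequence, $K_{\Theta^k}(F)$ and $\phi_{\Theta^k,F}$ all coincide (after the relabelling described in Proposition \ref{wsetsdefn}) with the data of $\overline{F}$ in $(\Theta^0,\dots,\Theta^{k-1})$, and the claim follows straight from the inductive hypothesis.
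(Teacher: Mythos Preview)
Your proposal is correct and follows essentially the same inductive strategy as the paper: induct on $k$, split according to which $\mathcal{F}_i$ contains $F$, and in the $\mathcal{F}_4$ case further split according to the position of $G$ in the link; your treatment of the $\mathcal{F}_3$ case via the suspension facts $K_{A*\Sigma_0}(G')=K_A(G')$ and $K_{A*\Sigma_0}(\{s_a\})=K_{A*\Sigma_0}(\{s_b\})=W$ is exactly what the paper does, just phrased differently.

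The one structural difference worth noting is that the paper first reduces to the case where $G=\{g\}$ is a single vertex, using that $\phi_{\Theta^k,F}$ is a bijection and that $K_{lk_{\Theta^k}(F)}(G)=\bigcap_{w\in G}K_{lk_{\Theta^k}(F)}(\{w\})$; this lets the paper avoid introducing your reduced face $\overline{G}$ and the compatibility check $\overline{F\cup G}=\overline{F}\cup\overline{G}$. Your observation that the $\mathcal{F}_i$-type of $F\cup G$ in $\Theta^k$ matches the $\mathcal{F}_i$-type of $G$ in $lk_{\Theta^k}(F)$ (when $F\in\mathcal{F}_4$) is correct and does the same work, but the single-vertex reduction is a little cleaner to write out.
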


\begin{proof}
This is a proof by induction on $k$. If $k=0$ then for any face $F \in \Theta^0 = \Sigma_{d-1}$ we have $W_{\Sigma_{d-1}}(F) = \emptyset$ and $K_{\Sigma_{d-1}}(F) = \emptyset$ so that the proposition holds. If $k \ge 1$ then will consider all five cases for faces in $\Theta^k$ and show that the propsition holds in each case. For each case, it is sufficient to assume that $G$ is a vertex $\{g\}$. This is sufficient since if this holds then  $\phi_{\Theta^k,F}$ being a bijection implies that the image of $K_{\Theta^k}(F) \cap K_{\Theta^k}(G) = \bigcap_{w \in G} K_{\Theta^k}(\{w\}) \cap K_{\Theta^k}(F)$ is equal to $\bigcap_{w \in G} K_{lk_{\Theta^k}(F)}(w) = K_{lk_{\Theta^k}(F)}(G)$.\\

\begin{itemize}
\item[(1)] Suppose that $F \in \mathcal{F}_1$, and we may suppose that $s_a \in F$. Recall that $K_{\Theta^k}(F) = K_{\Theta^{k-1}}(F)$, and that either $W_{\Theta^k}(F) = W_{\Theta^{k-1}}(F)$ or $W_{\Theta^k}(F) = (W_{\Theta^{k-1}}(F) - \{s_b\} )\cup \{w_k\}$ where $w_k$ takes the position of $s_b$ in the order. Then $\phi_{\Theta^k,F}$ is the same as $\phi_{\Theta^{k-1},F}$ except for the possible replacement of $s_b$ by $w_k$ in the codomain. \\

\begin{figure}[H]
\caption{The sets described in the case that $g \ne w_k$. Note that $w_k$ and $s_b$ might not be contained in the sets, and they may be contained in $K_{lk_{\Theta^{k}}(F)}(\{g\})$ and $K_{lk_{\Theta^{k-1}}(F)}(\{g\})$}
\label{phimap}

\[
\psset{unit=0.8cm}
\begin{pspicture}(3.2,-3)(8.2,6)
\pscircle(4,3.4){1.4}\pscircle(4,3.1){.6}\rput(4,5.4){$W_{\Theta^k}(F)$}\rput(3.2,3.7){$w_k$}
\pscircle(9,3.4){1.4}\pscircle(9,3.1){.6}\rput(9,5.4){$W_{\Theta^{k-1}}(F)$}\rput(8.2,3.7){$s_b$}
\pscircle(4,-1){1.4}\pscircle(4,-1.3){.6}\rput(4,-3){$K_{\Theta^k}(F)$}
\pscircle(9,-1){1.4}\pscircle(9,-1.3){.6}\rput(9,-3){$K_{\Theta^{k-1}}(F)$}

\rput(4,1){$\phi_{\Theta^k,F}$}\rput(9,1){$\phi_{\Theta^{k-1},F}$}
\psline{->}(4,1.2)(4,1.9)\psline{->}(9,1.2)(9,1.9)
\rput(1,-1.8){$K_{\Theta^k}(F \cup \{g\})$}\pscurve{->}(2,-1.5)(3,-1)(4,-1.5)
\rput(12,-1.8){$K_{\Theta^{k-1}}(F \cup \{g\})$}\pscurve{->}(10.6,-1.5)(9.8,-1)(9,-1.5)
\rput(12,2.8){$K_{lk_{\Theta^{k-1}}(F)}(\{g\})$}\pscurve{->}(10.6,3.1)(9.8,3.5)(9,3.1)
\rput(1,2.7){$K_{lk_{\Theta^{k}}(F)}(\{g\})$}\pscurve{->}(2,3.1)(3,3.5)(4,3.1)
\end{pspicture}
\]
\end{figure}
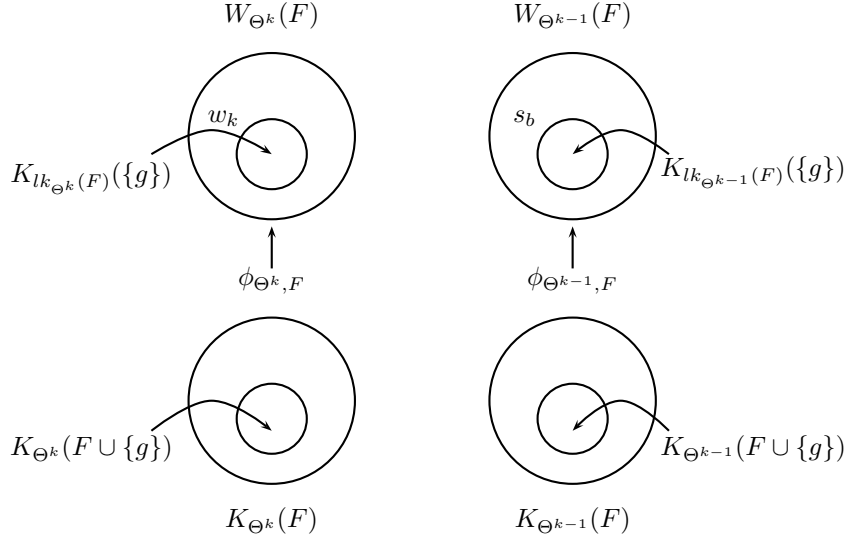

Assume that $g \ne w_k$. Then $K_{\Theta^k}(F \cup \{g\}) = K_{\Theta^{k-1}}(F \cup \{g\})$ (since $F \cup \{g\} \in \mathcal{F}_1$). By the inductive hypothesis $\phi_{\Theta^{k-1},F}(K_{\Theta^{k-1}}(F \cup \{g\})) =K_{lk_{\Theta^{k-1}}(F)}(\{g\})$. By the definition of the induced subdivision sequence we have that $K_{lk_{\Theta^{k}}(F)}(\{g\})$ is equal to $K_{lk_{\Theta^{k-1}}(F)}(\{g\})$ except for the possible replacement of $s_b$ by $w_k$. Hence the proposition holds in this case (see figure \ref{phimap}).\\

Assume that $g = w_k$. Then $K_{\Theta^k}(F \cup \{g\}) = K_{\Theta^{k-1}}(F \cup \{s_b\})$. By the inductive hypothesis $\phi_{\Theta^{k-1},F}(K_{\Theta^{k-1}}(F \cup \{s_b\}))=K_{lk_{\Theta^{k-1}}(F)}(\{s_b\})$. So $\phi_{\Theta^k,F}(K_{\Theta^k}(F \cup \{g\})) = K_{lk_{\Theta^k}(F)}(\{w_k\})$.\\

\item[(2)] Assume that $F \in \mathcal{F}_2$, and we may assume that $s_a \in F$. In this case $lk_{\Theta^k}(F)$ does not contain any of $s_a,~s_b$ or $w_k$ so that $g$ is not equal to any of these vertices. Here $K_{\Theta^k}(F) = K_{\Theta^{k-1}}(F -\{w_k\} \cup \{s_b\})$ and $W_{\Theta^k}(F) = W_{\Theta^{k-1}}(F - \{w_k\} \cup \{s_b\})$, and $\phi_{\Theta^k,F}$ is the same as $\phi_{\Theta^{k-1},F -\{w_k\} \cup \{s_b\}}$. Now $K_{\Theta^k}(F \cup \{g\}) =K_{\Theta^{k-1}}(F - \{w_k\} \cup \{s_b\} \cup \{g\})$ maps under $\phi_{\Theta^{k-1},F - \{w_k\} \cup \{s_b\}}$ to $K_{lk_{\Theta^{k-1}}(F - \{w_k\} \cup \{s_b\})}(\{g\})$ which equals $K_{lk_{\Theta^k}(F)}(\{g\})$ by the definition of the induced subdivision sequence. \\

\item[(3)] Assume that $F \in \mathcal{F}_3$. In this case both $s_a$ and $s_b$ are in $lk_{\Theta^k}(F)$, $K_{\Theta^k}(F) =K_{\Theta^{k-1}}(F -\{w_k\} \cup S)$, $W_{\Theta^{k}}(F) = W_{\Theta^{k-1}}(F- \{w_k\} \cup S)$, and $\phi_{\Theta^{k},F}$ is the same as $\phi_{\Theta^{k-1},F - \{w_k\} \cup S}$.\\

If $g$ is not equal to either $s_a$ or $s_b$ then $K_{\Theta^k}(F \cup \{g\}) = K_{\Theta^{k-1}}(F -\{w_k\} \cup S \cup \{g\}),$ which maps under $\phi_{\Theta^{k-1},F -\{w_k\} \cup S}$ to $K_{lk_{\Theta^{k-1}}(F -\{w_k\} \cup S)}(\{g\})$, and this is equal to $K_{lk_{\Theta^k}(F)}(\{g\})$ by the definition of the induced subdivision sequence.\\

    If $g=s_a$ then $K_{\Theta^k}(F \cup \{s_a\}) = K_{\Theta^{k-1}}(F -\{w_k\} \cup S)$, and this maps under $\phi_{\Theta^{k-1}, F - \{w_k\} \cup S}$ to the set $K_{lk_{\Theta^{k-1}}(F -\{w_k\} \cup S)}(\emptyset) = W_{\Theta^{k-1}}(F - \{w_k\} \cup S)$ which is the set $K_{lk_{\Theta^k}(F)}(\emptyset) = W_{\Theta^k}(F)$. This is the same set as $K_{lk_{\Theta^k}(F)}(\{s_a\})$ since $lk_{\Theta^k}(F)$ is the suspension of $lk_{\Theta^{k-1}}(F - \{w_k\} \cup S)$ in the two additional vertices $s_a$ and $s_b$, which are in $\Sigma_{d-1-|F|}$ in the induced subdivision sequence. By symmetry the result also holds when $g=s_b$. \\

\item[(4)] Suppose that $F \in \mathcal{F}_4$. Then $lk_{\Theta^k}(F)$ is the stellar subdivision of $lk_{\Theta^{k-1}}(F)$ in $S$ and $W_{\Theta^k}(F) = W_{\Theta^{k-1}}(F) \cup \{w_k\}$. We have $K_{\Theta^k}(F) = K_{\Theta^{k-1}}(F) \cup \{w_k\}$ and $\phi_{\Theta^k,F}$ restricts to $\phi_{\Theta^{k-1},F}$ on $K_{\Theta^{k-1}}(F)$ and maps $w_k$ to $w_k$. We now have to consider the different possibilities for $g$.\\

    Suppose that $\{g\} \in \mathcal{F}_1$. We may suppose that $g=s_a$. Then $K_{\Theta^k}(F \cup \{g\}) = K_{\Theta^{k-1}}(F \cup \{g\})$. Under $\phi_{\Theta^{k-1},F}$, $K_{\Theta^{k-1}}(F \cup \{g\})$ maps to the set $K_{lk_{\Theta^{k-1}}(F)}(\{g\})$, and this is equal to $K_{lk_{\Theta^k}(F)}(\{g\})$ since $\{g\} \in \mathcal{F}_1$ in $lk_{\Theta^k}(F)$.\\

    We cannot have $\{g\}$ in $\mathcal{F}_2$ since this implies that $|\{g\}| \ge 2$. \\

    Suppose that $\{g\} \in \mathcal{F}_3$, i.e. that $g=w_k$. In this case $K_{\Theta^k}(F \cup \{w_k\}) = K_{\Theta^{k-1}}(F \cup S)$. Under $\phi_{\Theta^{k-1},F}$ this maps to $K_{lk_{\Theta^{k-1}}(F)}(S)$, and this is equal to $K_{lk_{\Theta^k}(F)}(\{w_k\})$ since $\{w_k\} \in \mathcal{F}_3$ in $lk_{\Theta^k}(F)$.\\
		
		 Suppose that $\{g\} \in \mathcal{F}_4$. Then $K_{\Theta^k}(F \cup \{g\}) = K_{\Theta^{k-1}}(F \cup \{g\}) \cup \{w_k\}$. Now $\phi_{\Theta^{k-1} F}(K_{\Theta^{k-1}}(F \cup \{g\})) =K_{lk_{\Theta^{k-1}}(F)}(\{g\})$, so $\phi_{\Theta^k,F}(K_{\Theta^k}(F \cup \{g\})) = K_{lk_{\Theta^{k-1}}(F)}(\{g\}) \cup \{w_k\} = K_{lk_{\Theta^k}(F)}(\{g\})$, since $\{g\}$ is in $\mathcal{F}_4$ in $lk_{\Theta^k}(F)$.\\

    Suppose that $\{g\} \in \mathcal{F}_5$. Then $K_{\Theta^k}(F \cup \{g\}) = K_{\Theta^{k-1}}(F \cup \{g\})$ and this maps under $\phi_{\Theta^{k-1},F}$ to $K_{lk_{\Theta^{k-1}}(F)}(\{g\})$ which is equal to $K_{lk_{\Theta^k}(F)}(\{g\})$.\\

\item[(5)] Suppose that $F \in \mathcal{F}_5$. Then $K_{\Theta^k}(F) = K_{\Theta^{k-1}}(F)$, $W_{\Theta^k}(F) = W_{\Theta^{k-1}}(F)$, $K_{\Theta^k}(F \cup \{g\}) = K_{\Theta^{k-1}}(F \cup \{g\})$, $\phi_{\Theta^{k},F} = \phi_{\Theta^{k-1},F}$ and $K_{lk_{\Theta^k}(F)}(\{g\}) = K_{lk_{\Theta^{k-1}}(F)}(\{g\})$ so that the proposition clearly holds in this case.
\end{itemize}

\end{proof}

\begin{prop}
Suppose $(\Theta^0,...,\Theta^k)$ is a subdivision sequence. Then for any face $F \in \Theta^k$ the restriction of $\Gamma(\Theta^0,...,\Theta^k)$ to the vertices in $K_{\Theta^k}(F)$ is equivalent to $\Gamma((\Phi^0(F),...,\Phi^{l_F}(F))_{(\Theta^0,...,\Theta^k)})$. The map on the vertices is $\phi_{\Theta^k,F}$.\label{bigproptwo}
\end{prop}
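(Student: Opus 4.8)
The plan is to argue by induction on $k$, in parallel with the proof of Proposition \ref{bigprop}. The base case $k=0$ is immediate: every face $F\in\Sigma_{d-1}$ has $K_{\Sigma_{d-1}}(F)=\emptyset$ and an induced subdivision sequence $(\Phi^0(F))$ with no subdivisions, so both simplicial complexes are the void complex on the empty vertex set and $\phi_{\Sigma_{d-1},F}$ is the empty map. Before the inductive step I would record two elementary observations. First, whether $w_a$ and $w_b$ (with $a<b$) are adjacent in $\Gamma(\Theta^0,\dots,\Theta^k)$ is already decided by the initial segment $(\Theta^0,\dots,\Theta^b)$, so the restriction of $\Gamma(\Theta^0,\dots,\Theta^k)$ to $\{w_1,\dots,w_{k-1}\}$ is literally $\Gamma(\Theta^0,\dots,\Theta^{k-1})$, and the analogue holds for induced subdivision sequences. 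Second, the construction of $\Gamma$ uses only the combinatorial data of a subdivision sequence (which edge is subdivided at each step, links, the $\mathcal{F}_i$-classification of faces), so equivalent subdivision sequences --- and in particular the suspension of a subdivision sequence, which adds no new vertices --- have equal $\Gamma$ under the induced relabelling of new vertices. Since all the complexes involved are flag, it suffices throughout to check that $\phi_{\Theta^k,F}$ is an isomorphism of underlying graphs.

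For the inductive step, write $S=\{s_a,s_b\}$ for the $k$-th subdivided edge and split into the five cases for $F\in\Theta^k$. In cases (1), (2), (3) and (5), Proposition \ref{ksetsdefn} gives $K_{\Theta^k}(F)=K_{\Theta^{k-1}}(F')$ for the face $F'$ occurring there ($F'=F$ in (1) and (5); $F'=F-\{w_k\}\cup\{s_b\}$ in (2); $F'=F-\{w_k\}\cup S$ in (3)), which is in particular a subset of $\{w_1,\dots,w_{k-1}\}$; Proposition \ref{wsetsdefn} shows $\phi_{\Theta^k,F}$ equals $\phi_{\Theta^{k-1},F'}$ up to that same relabelling; and the definition of the induced subdivision sequence makes $(\Phi^i(F))_{(\Theta^0,\dots,\Theta^k)}$ equal, up to equivalence and up to suspension by $\Sigma_0$ in case (3), to $(\Phi^i(F'))_{(\Theta^0,\dots,\Theta^{k-1})}$. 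Combining the inductive hypothesis with the two observations above then disposes of these four cases.

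All of the content is in case (4). Here $F\in\mathcal{F}_4$, so $F\cup S\in\Theta^{k-1}$ (equivalently $S\in lk_{\Theta^{k-1}}(F)$); moreover $K_{\Theta^k}(F)=K_{\Theta^{k-1}}(F)\cup\{w_k\}$, the map $\phi_{\Theta^k,F}$ extends $\phi_{\Theta^{k-1},F}$ by $w_k\mapsto w_k$, and $(\Phi^i(F))_{(\Theta^0,\dots,\Theta^k)}$ is $(\Phi^i(F))_{(\Theta^0,\dots,\Theta^{k-1})}$ --- whose last complex $\Phi^{l_F-1}(F)$ is $lk_{\Theta^{k-1}}(F)$ --- followed by one further subdivision of the edge $S$. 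On $K_{\Theta^{k-1}}(F)$ the restriction of $\Gamma(\Theta^0,\dots,\Theta^k)$ coincides with the restriction of $\Gamma(\Theta^0,\dots,\Theta^{k-1})$, so the inductive hypothesis identifies it (via $\phi_{\Theta^{k-1},F}$) with $\Gamma(\Phi^0(F),\dots,\Phi^{l_F-1}(F))$; it only remains to place the edges at the new vertex. In $\Gamma(\Theta^0,\dots,\Theta^k)$ a vertex $w_a$ ($a<k$) is adjacent to $w_k$ precisely when $w_a\in K_{\Theta^k}(\{w_k\})=K_{\Theta^{k-1}}(S)$, so the neighbours of $w_k$ inside $K_{\Theta^k}(F)$ are exactly $K_{\Theta^{k-1}}(F)\cap K_{\Theta^{k-1}}(S)=K_{\Theta^{k-1}}(F\cup S)$. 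On the other side, the new vertex of $\Phi^{l_F}(F)$ --- which is $w_k$ itself, by Proposition \ref{wsetsdefn}(4) --- is adjacent in $\Gamma$ exactly to $K_{\Phi^{l_F-1}(F)}(S)=K_{lk_{\Theta^{k-1}}(F)}(S)$. That $\phi_{\Theta^{k-1},F}$ carries $K_{\Theta^{k-1}}(F\cup S)$ onto $K_{lk_{\Theta^{k-1}}(F)}(S)$ is exactly Proposition \ref{bigprop} applied to the faces $F$ and $S$ of $(\Theta^0,\dots,\Theta^{k-1})$, which closes the induction.

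I expect case (4) to be the only genuine obstacle: the other four cases are bookkeeping driven by the recursions of Propositions \ref{ksetsdefn} and \ref{wsetsdefn} together with the two observations, while case (4) is precisely where Proposition \ref{bigprop} is needed, in order to verify that the one new edge incident to $w_k$ is inserted correctly.
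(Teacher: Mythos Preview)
Your proof is correct and follows essentially the same approach as the paper: induction on $k$, a case split over the five classes $\mathcal{F}_1,\dots,\mathcal{F}_5$, with cases (1)--(3) and (5) handled by the recursions of Propositions \ref{wsetsdefn} and \ref{ksetsdefn}, and case (4) resolved by invoking Proposition \ref{bigprop} to identify the neighbours of $w_k$. Your two preliminary observations and the grouping of the bookkeeping cases make the presentation slightly more streamlined, but the argument is the same.
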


\begin{proof}
We show that the proposition holds by induction on the number of subdivisions. The proposition is clearly true when no subdivisions have been performed. We suppose that the proposition holds for any $\Theta \in sd(\Sigma_{d-1})$ obtained by $k-1$ subdivisions. We let $\Theta^k \in sd(\Sigma_{d-1})$ be obtained by subdividing $\Theta^{k-1}$ in the edge $S = \{s_a,s_b\}$ to give the new vertex $w_k$, and show that the proposition holds for $\Theta^k$. We consider all five cases for a face $F \in \Theta^k$.\\

\begin{itemize}
\item[(1)] Suppose that $F \in \mathcal{F}_1$, and we may suppose that $s_a \in F$. Then $K_{\Theta^k}(F) = K_{\Theta^{k-1}}(F)$, and by the definition of the induced subdivision sequence $$(\Phi^0(F),...,\Phi^{l_F}(F))_{(\Theta^0,...,\Theta^{k})}$$ we have that 
$$\Gamma((\Phi^0(F),...,\Phi^{l_F}(F))_{(\Theta^0,...,\Theta^{k})}) \cong \Gamma((\Phi^0(F),...,\Phi^{l_F}(F))_{(\Theta^0,...,\Theta^{k-1})}),$$ where the map on all vertices is the identity except that $s_b \mapsto w_k$ if $s_b \in W_{\Theta^{k-1}}(F)$. By induction the restriction of $\Gamma(\Theta^{k-1})$ to $K_{\Theta^{k-1}}$ is isomorphic to $\Gamma((\Phi^0(F),...,\Phi^{l_F}(F))_{(\Theta^0,...,\Theta^{k-1})})$.  Hence the proposition holds in this case since $\phi_{\Theta^k,F}$ is the same as $\phi_{\Theta^{k-1},F}$ except for the possible replacement of $s_b$ by $w_k$ in the codomain.\\

\item[(2)] Suppose that $F \in \mathcal{F}_2$. Then $K_{\Theta^k}(F) = K_{\Theta^{k-1}}(F -\{w_k\} \cup \{s_b\})$ and by induction the restriction of $\Gamma(\Theta^{k})$ to $ K_{\Theta^{k-1}}(F -\{w_k\} \cup \{s_b\})$ is equivalent to 
$$\Gamma((\Phi^0(F -\{w_k\} \cup \{w_b\}),...,\Phi^{l_{F-\{w_k\} \cup \{s_b\}}}(F -\{w_k\} \cup \{s_b\}))_{(\Theta^0,...,\Theta^{k-1})}).$$ By the definition of the induced subdivision sequences we have that $$\Gamma((\Phi^0(F),...,\Phi^{l_F}(F))_{(\Theta^0,...,\Theta^{k})})$$ is equal to $$\Gamma((\Phi^0(F - \{w_k\} \cup \{s_b\}),...,\Phi^{l_{F-\{w_k\} \cup \{s_b\}}}(F - \{w_k\} \cup \{s_b\}))_{(\Theta^0,...,\Theta^{k-1})}).$$ Hence the proposition holds in this case.  \\

\item[(3)] Suppose that $F \in \mathcal{F}_3$. Then $K_{\Theta^k}(F) = K_{\Theta^{k-1}}(F-\{w_k\} \cup S)$ and by the definition of the induced subdivision sequences we have that 
$$\Gamma((\Phi^0(F-\{w_k\} \cup S),...,\Phi^{l_{F-\{w_k\} \cup S}}(F-\{w_k\} \cup S))_{(\Theta^0,...,\Theta^{k-1})})$$ is equal to $$\Gamma((\Phi^0(F),...,\Phi^{l_F}(F))_{(\Theta^0,...,\Theta^k)}).$$ Hence the desired condition holds in this case.\\

\item[(4)] Suppose that $F \in \mathcal{F}_4$. Then $K_{\Theta^k}(F) = K_{\Theta^{k-1}}(F) \cup \{w_k\}$, and by the inductive hypothesis the restriction of $\Gamma(\Theta^0,...,\Theta^{k})$ to $K_{\Theta^{k-1}}(F)$ is equivalent to $\Gamma(\Phi^0(F),...,\Phi^{l_{F}-1}(F))_{(\Theta^0,...,\Theta^{k-1})}$. By the definition of the induced subdivision sequence $(\Phi^0(F),...,\Phi^{l_F}(F))_{(\Theta^0,...,\Theta^{k})}$, $\Phi^{l_F}(F)$ is the subdivision of $\Phi^{l_F-1}(F)$ in the edge $S$. Hence $\Gamma((\Phi^0(F),...,\Phi^{l_F}(F))_{(\Theta^0,...,\Theta^{k})})$ is obtained from $\Gamma((\Phi^0(F),...,\Phi^{l_F-1}(F))_{(\Theta^0,...,\Theta^{k-1})})$ by attaching the vertex $w_k$ to the vertices in $K_{(\Phi^0(F),...,\Phi^{l_F}(F))}(\{w_k\}) = K_{(\Phi^0(F),...,\Phi^{l_F-1}(F))}(S)$. The vertex $w_k$ attaches to $K_{\Theta^{k-1}}(S)$ in $\Gamma(\Theta^k)$, and so attaches to the vertices $K_{\Theta^{k-1}}(F) \cap K_{\Theta^{k-1}}(S)$ in $\Gamma((\Phi^0(F),...,\Phi^{l_F-1}(F))_{(\Theta^0,...,\Theta^{k-1})})$. By Proposition \ref{bigprop}, $\phi_{\Theta^{k-1},F}$ maps $K_{\Theta^{k-1}}(F) \cap K_{\Theta^{k-1}}(S)$ to $K_{lk_{\Theta^{k-1}}(F)}(S)$ in $W_{\Theta^{k-1}}(F)$, so that the proposition holds in this case. \\

\item[(5)]  Suppose that $F \in \mathcal{F}_5$. Then $K_{\Theta^k}(F) = K_{\Theta^{k-1}}(F)$, hence it is clear by the definition of the induced subdivision sequences and the relevant sets that the proposition holds in this case. 
\end{itemize}

\end{proof}

\begin{proof}[Proof of Theorem \ref{bigthm}]
We assume by induction that the theorem holds for any simplicial complex in $sd(\Sigma_{i})$ where $i < d-1$. If a subdivision is made on $\Theta \in sd(\Sigma_{d-1})$ in an edge $S$ to obtain $\Theta'$ then by the construction of $\Gamma(\Theta')$, and by Proposition \ref{bigproptwo} we have 

$$f(\Gamma(\Theta')) - f(\Gamma(\Theta)) = tf(\Gamma(lk_{\Theta}(S))),$$ and by the inductive hypothesis we have 

$$f(\Gamma(lk_{\Theta}(S)))=\gamma(lk_{\Theta}(S)).$$ Also, by Proposition \ref{sdlink} 

$$\gamma(\Theta') - \gamma(\Theta) = t\gamma(lk_{\Theta}(S)),$$ so that

$$f(\Gamma(\Theta')) - f(\Gamma(\Theta)) = \gamma(\Theta') - \gamma(\Theta).$$

Since $f(\Gamma(\Sigma_{d-1})) = 1$ and $\gamma(\Sigma_{d-1}) =1$, by induction on the number of subdivions performed the theorem holds. 

\end{proof}

\end{section}

\begin{section}{The dual simplicial complex of nestohedra}

In the paper \cite{ai} for any flag building set $\mathcal{B}$ with respect to a given flag ordering $O=(\mathcal{D},I_1,....,I_k)$ the author defines a flag simplicial complex $\Gamma(O)$, whose face vector is the $\gamma$-vector of the flag nestohedron $P_{\mathcal{B}}$. Adding building set elements in a flag ordering is equivalent to performing edge subdivisions on the dual simplicial complex (see \cite{ai}). In this section we show that $\Gamma(O)$ is equivalent to the flag simplicial complex we define here with respect to that subdivision sequence. First we recall some of the the relevant definitions in \cite{ai}.\\

A \emph{building set} $\mathcal{B}$ on a finite set $S$ is a set of non empty subsets of $S$ such that
\begin{itemize}
\item For any $I,~ J \in \mathcal{B}$ such that $I \cap J \ne \emptyset$, $I \cup J \in \mathcal{B}$.
\item $\mathcal{B}$ contains the singletons $\{i\}$, for all $i \in S$.
\end{itemize}
$\mathcal{B}$ is \emph{connected} if it contains $S$. A building set $\mathcal{B}$ is \emph{flag} if for every non singleton $I \in \mathcal{B}$ there exists $I_1,I_2 \in \mathcal{B}$ such that $I_1 \cap I_2 = \emptyset$ and $I_1 \cup I_2 = I$. Let $\mathcal{B}$ be a building set. A \emph{binary decomposition} or \emph{decomposition} of a non singleton element $I \in \mathcal{B}$ is a set $\mathcal{D} \subseteq \mathcal{B}$ that forms a minimal connected flag building set on $I$.\\

Recall (see \cite{po} and \cite{prw}) the definition of the nestohedron $P_{\mathcal{B}}$ defined for any building set $\mathcal{B}$ . According to \cite{prw} and \cite{nai} the nestohedron $P_{\mathcal{B}}$ is flag exaclty when $\mathcal{B}$ is a flag building set. \\

Suppose that $\mathcal{B}$ is a connected flag building set on $[n]$, $\mathcal{D}$ is a decomposition of $[n]$ in $\mathcal{B}$, and $I_1,I_2,...,I_k$is an ordering of $\mathcal{B}-\mathcal{D}$, such that $\mathcal{B}_j:=\mathcal{D}\cup\{I_1,I_2,...,I_j\}$ is a flag building set for all $j$. (Such an ordering exists by Lemma 6 in \cite{vol}). We call the pair consisting of such a decomposition $\mathcal{D}$ and the ordering on $\mathcal{B} - \mathcal{D}$ a \emph{flag ordering} of $\mathcal{B}$, denoted $O$, or $(\mathcal{D},I_1,...,I_k)$. For any $I_j \in \mathcal{B}- \mathcal{D}$, we say an element in $\mathcal{B}_{j-1}$ is \emph{earlier} in the flag ordering than $I_j$, and an element in $\mathcal{B} - \mathcal{B}_j$ is \emph{later} in the flag ordering than $I_j$.\\

For any $j \in [k]$ define

$$U_j : =\{i~|~i<j, I_i \not \subseteq I_j,~\hbox{there is no}~ I \in \mathcal{B}_{i-1}~\hbox{such that}~I \backslash I_j = I_i \backslash I_j\},$$ and
$$V_j: = \{i~|~i<j,~I_i \subseteq I_j, \exists ~I \in \mathcal{B}_{i-1} ~\hbox{such that}~I_i  \subsetneq I \subsetneq I_j \}.$$ 

Given a flag building set $\mathcal{B}$ with flag ordering $O = (\mathcal{D},I_1,...,I_k)$ define a graph on the vertex set $$V_O=\{v(I_1),...,v(I_k)\},$$where for any $i<j$, $v(I_i)$ is adjacent to $v(I_j)$ if and only if $i \in U_j \cup V_j$. Then define a flag simplicial complex $\Gamma(O)$ whose faces are the cliques in this graph.\\

Suppose that $\Theta_{P_{\mathcal{B}}} \in sd(\Sigma_{d-1})$ is the dual simplical complex to a (flag) nestohedron $P_{\mathcal{B}}$. Suppose also that $\Theta_{P_{\mathcal{B}}}$ has a subdivision sequence $(\Theta^0,...,\Theta^k )$, $\Theta^k = \Theta_{P_{\mathcal{B}}}$, that corresponds to a flag ordering $O = (\mathcal{D},I_1,...,I_k)$ of $\mathcal{B}$. This implies that the vertex $w_i \in \Theta^k$ corresponds to the building set element $I_i$ (this is also the label of the corresponding face of $P_{\mathcal{B}}$). Again we assume that the last edge to be subdivided is $S =\{s_a,s_b\}$. Thus, if $J_a$ is the building set element corresponding to $s_a$ and $J_b$ corresponds to $s_b$ then $J_a \cap J_b = \emptyset$ and $J_a \cup J_b = I_k$.\\

\begin{prop}
Let $\Theta$ be given as above. Then $h \in U_k \cup V_k$ if and only if $w_h \in K_{\Theta^k}(\{w_k\})$. \label{nesto} 
\end{prop}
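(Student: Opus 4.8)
The plan is to prove this by induction on $k$, the number of subdivisions, running the induction in parallel on the combinatorial side (the sets $U_j$, $V_j$ attached to the flag ordering) and on the geometric side (the sets $K_{\Theta^j}(\{w_j\})$ attached to the subdivision sequence). The base case is vacuous: when $k=1$ the edge $S$ lies in $\Sigma_{d-1}$, so $K_{\Theta^1}(\{w_1\}) = K_{\Sigma_{d-1}}(\{s_a\}) \cap K_{\Sigma_{d-1}}(\{s_b\}) = \emptyset$, and correspondingly $U_1 \cup V_1 = \emptyset$ since there is no $i < 1$. For the inductive step, fix $h < k$ and translate both conditions. On the geometric side, Proposition \ref{ksetsdefn}(2) gives $K_{\Theta^k}(\{w_k\}) = K_{\Theta^{k-1}}(\{s_a\}) \cap K_{\Theta^{k-1}}(\{s_b\})$ (this is case $\mathcal{F}_3$ for the vertex $w_k$), so $w_h \in K_{\Theta^k}(\{w_k\})$ iff $w_h \in K_{\Theta^{k-1}}(\{s_a\})$ and $w_h \in K_{\Theta^{k-1}}(\{s_b\})$. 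Now use the dictionary between edge subdivisions of $\Theta_{P_{\mathcal{B}}}$ and adding building set elements: the vertices $s_a, s_b$ correspond to building set elements $J_a, J_b$ with $J_a \sqcup J_b = I_k$, and $w_h \in K_{\Theta^{k-1}}(\{s_a\})$ should be identified with the statement that $v(I_h)$ is adjacent to $v(J_a)$ in the graph defining $\Gamma(O')$ for the truncated flag ordering $O' = (\mathcal{D}, I_1, \dots, I_{k-1})$, i.e. that $h \in U' \cup V'$ where $U', V'$ are the index sets computed relative to $J_a$ (and similarly for $J_b$).

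The heart of the argument is then purely combinatorial: show that, for $h < k$,
$$h \in U_k \cup V_k \iff \bigl(h \in (U \cup V \text{ relative to } J_a)\bigr) \text{ and } \bigl(h \in (U \cup V \text{ relative to } J_b)\bigr).$$
Here $I_k = J_a \sqcup J_b$. I would unwind the definitions of $U_j$, $V_j$ with $j = k$ and $I_j = I_k$. For the $V$-part: $h \in V_k$ means $I_h \subseteq I_k$ and there exists $I \in \mathcal{B}_{h-1}$ with $I_h \subsetneq I \subsetneq I_k$. Since $J_a, J_b$ partition $I_k$ and $\mathcal{B}_{h-1}$ is a building set (connected subsets behave well under the axioms), $I_h \subseteq I_k$ forces $I_h$ to be contained in exactly one of $J_a$ or $J_b$ (as $I_h$ is connected in $\mathcal{B}$, using that a building set element contained in $I_k = J_a \sqcup J_b$ with $J_a, J_b \in \mathcal{B}_{h-1}$ cannot straddle both — otherwise it would union with one of them, contradicting minimality/the structure). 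Once $I_h \subseteq J_a$, say, the intermediate element $I$ witnessing $h \in V_k$ can likewise be normalized to lie inside $J_a$, matching the condition "$h \in V$ relative to $J_a$"; and automatically $I_h \subseteq J_a \subsetneq I_k$ shows the $J_b$-side condition is the degenerate one that needs separate bookkeeping. For the $U$-part: $h \in U_k$ means $I_h \not\subseteq I_k$ and no $I \in \mathcal{B}_{h-1}$ has $I \setminus I_k = I_h \setminus I_k$; I would rewrite $I_h \setminus I_k = (I_h \setminus J_a) \setminus J_b$ and use that $J_a, J_b$ are disjoint to split the non-existence condition into the two conditions relative to $J_a$ and $J_b$ separately, the conjunction being exactly what is needed.

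The main obstacle I anticipate is this case analysis at the boundary between "$I_h$ contained in $I_k$" and "$I_h$ not contained in $I_k$", and more precisely making rigorous the claim that a building-set element $I_h$ sitting below $I_k = J_a \sqcup J_b$ must sit below one of $J_a$, $J_b$, together with checking that when $I_h \subseteq J_a$ the relevant condition relative to $J_b$ is satisfied for the right (trivial) reason so that the conjunction comes out correctly. This requires careful use of the building set axioms (closure under union of intersecting elements) and of the fact that $J_a, J_b \in \mathcal{B}_{h-1}$ because the flag ordering adds $J_a, J_b$ before $I_k$. I would also need to confirm that the inductive hypothesis is being applied to the correct smaller object: the point is that $s_a$ is a vertex of $\Theta^{k-1}$ whose link and whose $K$-set are governed by the induced subdivision sequence, which on the building-set side corresponds to the flag ordering of the (connected flag) building set on $J_a$ obtained by restriction — so strictly the induction should be phrased for all dual simplicial complexes of flag nestohedra with flag orderings of length $< k$, not just for $\Theta$ itself, and the reduction of $K_{\Theta^{k-1}}(\{s_a\})$ to that smaller sequence is exactly Proposition \ref{bigproptwo} applied to the relevant face. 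Once these identifications are in place, the equivalence follows by matching the two sides term by term.
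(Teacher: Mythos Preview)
Your induction on $k$ does not close. The identity $K_{\Theta^k}(\{w_k\}) = K_{\Theta^{k-1}}(\{s_a\}) \cap K_{\Theta^{k-1}}(\{s_b\})$ is correct, but the induction hypothesis is the proposition for smaller indices, i.e.\ statements of the form ``$h \in U_j \cup V_j \iff w_h \in K_{\Theta^j}(\{w_j\})$'' for $j<k$. This characterizes the $K$-set of the \emph{newest} vertex at the moment it is created; it says nothing about $K_{\Theta^{k-1}}(\{s_a\})$, which is the $K$-set of an \emph{old} vertex at a much later time. After $s_a$ appears, every subsequent subdivision whose edge lies in the link of $s_a$ adds a new vertex to $K(\{s_a\})$, so $K_{\Theta^{k-1}}(\{s_a\})$ typically contains many $w_m$ with $m$ greater than the index of $s_a$; none of those memberships are covered by the inductive statement. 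Worse, $s_a$ may correspond to an element of the decomposition $\mathcal{D}$ rather than to some $I_j$, in which case there is no $j$ to which the hypothesis could even be applied. Your appeal to Proposition~\ref{bigproptwo} does not repair this: that proposition identifies the restricted $\Gamma$-complex with $\Gamma$ of the induced subdivision sequence on $lk_{\Theta^{k-1}}(\{s_a\})$, but there is no reason that link, with its induced sequence, is again the dual of a flag nestohedron with a flag ordering, so you cannot invoke the induction hypothesis on it. Finally, the ``$U,V$ relative to $J_a$'' you need is not the $U_j,V_j$ of the paper (which only records indices $i<j$); you would have to invent and control a two-sided adjacency notion, which is essentially a new statement.

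The paper avoids all of this by \emph{not} inducting on $k$. Instead it unwinds the recursion for $K$ from time $k$ back to time $h$: if $J_{m_1},\dots,J_{m_n}$ are the maximal elements of $\mathcal{B}_h$ contained in $I_k$, then $w_h \in K_{\Theta^k}(\{w_k\})$ is equivalent to $w_h \in K_{\Theta^h}(\{w_{m_l}\})$ for every $l$, which in turn means that each $w_{m_l}$ is adjacent in $\Theta^{h-1}$ to both endpoints $w_{h1},w_{h2}$ of the $h$-th subdivided edge. This is a concrete nested-set condition on $\{J_{h1},J_{m_l}\}$ and $\{J_{h2},J_{m_l}\}$ in $\mathcal{B}_{h-1}$, and the proof then checks directly (splitting on $I_h \subseteq I_k$ versus $I_h \not\subseteq I_k$) that it matches the definitions of $V_k$ and $U_k$ respectively. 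The key move you are missing is this reduction to time $h$ via the maximal components of $I_k$ in $\mathcal{B}_h$; once you have that, no induction is needed.
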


\begin{proof}
Let $\{J_{m_1},...,J_{m_n}\}$ be the maximal components of the restriction to $I_k$ in $\mathcal{B}_h$, and let $J_{h1},~J_{h2}$ denote the (unique) two elements in $\mathcal{B}_{h-1}$ such that $J_{h1} \cap J_{h2} = \emptyset$ and $J_{h1} \cup J_{h2} = I_h$. First we note that $w_h \in K_{\Theta^k}(\{w_k\})$ is equivalent to $w_h \in K_{\Theta^h}(\{w_{m_l}\})$ for $1 \le l \le n$. This is true since $w_h \in K_{\Theta^h}(\{w_{m_l}\})$ for $1 \le l \le n$ implies that for all of the elements $I_{\beta} \in \mathcal{B}_k$, $\beta > h$ that are subsets of $I_k$, and such that $I_{\beta}$ is a maximal subset of $I_k$ in $\mathcal{B}_{\beta}$, we also have $w_h \in K_{\Theta^{\beta}}(\{w_{\beta}\})$. Conversely, $w_h \not \in K_{\Theta^h}(\{w_{m_l}\})$ for some $l \in \{1,...,n\}$ implies that $w_h \not \in K_{\Theta^{\beta}}(w_{\beta})$ for all $\beta > h$ such that $I_{m_l} \subseteq I_{\beta} \subseteq I_k$ and $I_{\beta}$ is a maximal subset of $I_k$ in $\mathcal{B}_{\beta}$.\\

\begin{itemize}

\item First we suppose that $I_h \subseteq I_k$. We show that $h \in V_k$ if and only if $w_h \in K_{\Theta^k}(\{w_k\})$.\\

Suppose that $h \in V_k$, i.e. $I_h \subseteq I_k$ and there exists a building set element $J_{m_l}$ that is earlier than $I_h$ in the flag ordering such that $I_h \subsetneq J_{m_l} \subsetneq I_k$ (note that it is possible that $J_{m_l}= J_a$ or $J_b$). Then each of the vertices in the set $\{w_{m_1},...,w_{m_n}\}$ are adjacent to both of the vertices $w_{h1}$ and $w_{h2}$, since any pair are a nested set. Thus we have $w_h \in K_{\Theta^k}(\{w_{m_l}\})$ for $1 \le l \le n$, so that $w_h \in K_{\Theta^k}(\{w_k\})$.\\ 

To show that $w_h \in K_{\Theta^k}(\{w_k\})$ implies $h \in V_k$, we show the contrapositive, that $h \not \in V_k$ implies that $w_h \not \in K_{\Theta^k}(\{w_k\})$. $h \not \in V_k$ implies that $I_h = J_{m_l}$ for some $1 \le l \le n$, so that $w_h = w_{m_l} \not \in K_{\Theta^h}(\{w_{m_l}\})$ and (by the reasoning given above) this implies that $w_h \not \in K_{\Theta^k}(\{w_k\})$.\\
 
\item Now suppose that $I_h \not \subseteq I_k$. We show that $h \in U_k$ if and only if $w_h \in K_{\Theta^k}(\{w_k\})$.\\

Suppose that $h \in U_k$. Then $I_h \cap I_k$ is a union of maximal components in the set $J_{m_1},..,J_{m_n}$. Also, each of the maximal components $J_{m_1},...,J_{m_n}$ can intersect at most one of $I_{h1}$ or $I_{h2}$, and cannot be equal to one of $I_{h1}$ or $I_{h2}$ since this implies that $h \not \in U_k$. We therefore have that every $J_{m_l}$, $1 \le l \le n$ is a nested set with either of $I_{h1}$ and $I_{h2}$ since they are a subset of it, or if not a subset of it and their union was in $\mathcal{B}_{h-1}$ then we would not have $h \in U_{k}$. Hence $w_{h1}$ and $w_{h2}$ are adjacent to all of the vertices $w_{m_1},...,w_{m_n}$ in $\Theta^{h-1}$, and therefore $w_h \in K_{\Theta^k}(\{w_k\})$.\\ 

Suppose that $w_h \in K_{\Theta^k}(\{w_k\})$. Then this implies $w_h \in K_{\Theta^{h}}(\{w_{m_l}\})$ for all $1 \le l \le n$, i.e. that $w_{h1}$ and $w_{h2}$ are adjacent to each of $w_{m_l}$ in $\Theta^h$. This implies that neither $I_{h1}$ or $I_{h2}$ are in $\{J_{m_1},...,J_{m_n}\}$ and neither $I_{h1}$ or $I_{h2}$ can be a union of elements in $\{J_{m_1},...,J_{m_n}\}$ (since these are the maximal components). Since each of $I_h,~I_{h1},~I_{h2}$ are a nested set with each of $J_{m_1},...,J_{m_l}$ we have that each of $I_h \cap I_k$, $I_{h1} \cap I_k$ and $I_{h2} \cap I_k$ is a union of elements of $J_{m_1},...,J_{m_l}$. This implies that neither $I_{h1}$ nor $I_{h2}$ is contained in $I_k$. Suppose for a contradiction that $h \not \in U_k$, so that there is an element $I_{\alpha}$ that is earlier than $I_h$ in the flag ordering that has the same image in the contraction by $I_k$ as $I_h$. We suppose that $I_{\alpha}$ is maximal with respect to this property and will consider the following three cases for $I_{\alpha}$.\\

\begin{itemize}
\item Suppose that neither $I_{\alpha} \subseteq I_h$ nor $I_h \subseteq I_{\alpha}$. Then (using the building set axioms) this implies the contradiction that $I_{\alpha}$ is not maximal with this property.\\

\item If $I_{\alpha} \subseteq I_h$ then we have the contradiction that there is an element that is a subset of $I_h$ earlier in the flag ordering that intersects both $I_{h1}$ and $I_{h2}$. \\

\item If $I_h \subseteq I_{\alpha}$ then consider a decomposition of $I_{\alpha}$ in $\mathcal{B}_{\alpha}$. Note that since $I_{\alpha}$ is maximal with this property that $I_{\alpha}$ is the disjoint union of three sections: $I_{h1}, I_{h2}$ and $G: =I_{\alpha}-(I_{h1} \cup I_{h2})$, where $G = \bigcup_{j=1}^s J_{ij}$ is a union of elements in $J_{m_{1}},...,J_{m_n}$. Fix a decompositon $\widetilde{\mathcal{D}}$ of $I_{\alpha}$ in $\mathcal{B}_{\alpha}$. There must be an element $J \in \widetilde{\mathcal{D}}$ that intersects exactly two elements of the set $I_{h1},I_{h2},J_{i1},...,J_{is}$. To find such an element take the set of all elements that intersect more than one of these sets, and from this set choose an element of minimal cardinality. $J$ cannot intersect a pair from $J_{i1},...,J_{is}$ since $J_{m_1},...,J_{m_n}$ are maximal subsets of $I_k$ in $\mathcal{B}_{h}$. $J$ cannot intersect $I_{h1}$ and $I_{h2}$ since this implies that $I_h \in \mathcal{B}_{h-1}$. We cannot have $J$ intersect one of $J_{i1},...,J_{is}$ and one of $I_{h1}$ and $I_{h2}$ since this contradicts the nested set property. Hence we have a contradiction in this case too.\\
\end{itemize}

\end{itemize}

\end{proof}

\begin{corol}
Suppose $\Theta_{P_{\mathcal{B}}}$ is the dual simplicial complex to a flag nestohedron $P_{\mathcal{B}}$, and that the subdivision sequence $(\Theta^0,...,\Theta_{P_{\mathcal{B}}})$ is equivalent to a flag ordering $O$ of the nestohedron. Then $$\Gamma(\Theta^0,...,\Theta_{P_{\mathcal{B}}}) \cong \Gamma(O),$$  where $w_j \mapsto v(I_j)$.
\end{corol}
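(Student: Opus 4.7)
The plan is to reduce the corollary to Proposition \ref{nesto} by showing it suffices to check adjacency in the $1$-skeletons. Both $\Gamma(\Theta^0,\ldots,\Theta_{P_{\mathcal{B}}})$ and $\Gamma(O)$ are by construction flag simplicial complexes, so each is determined by its underlying graph. Thus the map $w_j \mapsto v(I_j)$ extends to an isomorphism of simplicial complexes exactly when it induces an isomorphism on graphs, i.e.\ when for every pair $h < j$ we have $w_h$ adjacent to $w_j$ in $\Gamma(\Theta^0,\ldots,\Theta_{P_{\mathcal{B}}})$ if and only if $v(I_h)$ is adjacent to $v(I_j)$ in $\Gamma(O)$.

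The adjacency condition in $\Gamma(\Theta^0,\ldots,\Theta_{P_{\mathcal{B}}})$ (Definition \ref{maindefn}) is that $w_h \in K_{(\Theta^0,\ldots,\Theta^j)}(\{w_j\})$. The adjacency condition in $\Gamma(O)$ (as defined in the paper) is that $h \in U_j \cup V_j$. So we need to match these two conditions for every pair $h<j$, not just for the last edge subdivided.

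The key observation is that Proposition \ref{nesto} is formulated for the final subdivision $k$ and the final building-set element $I_k$, but the same statement applies verbatim to any truncated initial segment. Specifically, for any fixed $j$, the subdivision sequence $(\Theta^0,\ldots,\Theta^j)$ corresponds to the truncated flag ordering $(\mathcal{D},I_1,\ldots,I_j)$ of the flag building set $\mathcal{B}_j$, and $\Theta^j$ is precisely the dual simplicial complex of the flag nestohedron $P_{\mathcal{B}_j}$. Applying Proposition \ref{nesto} to this shorter sequence with the last-subdivided edge being the one associated to $I_j$ yields: $w_h \in K_{(\Theta^0,\ldots,\Theta^j)}(\{w_j\})$ if and only if $h \in U_j \cup V_j$, where $U_j, V_j$ are computed inside $\mathcal{B}_j$. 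But $U_j$ and $V_j$ only involve indices $i<j$ and building-set elements in $\mathcal{B}_{j-1}$, so their definitions in $\mathcal{B}_j$ and in $\mathcal{B}$ coincide.

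This matches the two adjacency conditions for every $h<j$, so the vertex bijection $w_j \mapsto v(I_j)$ is a graph isomorphism, hence (by flagness) a simplicial isomorphism. The only mild subtlety is verifying that the truncated subdivision sequence genuinely realizes $P_{\mathcal{B}_j}$ and that $U_j,V_j$ are computed from $\mathcal{B}_{j-1}$ only, both of which are immediate from the setup preceding Proposition \ref{nesto}; there is no real obstacle beyond citing that proposition in the appropriate shape.
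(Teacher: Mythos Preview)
Your proposal is correct and follows essentially the same approach as the paper's own proof. The paper's argument is terser --- it simply observes that ``Proposition \ref{nesto} holds for all $k$'' and hence gives the adjacency equivalence for every pair $i<j$ --- but your explicit unpacking (flagness reduces the question to graphs, and the proposition applies to each truncated sequence $(\Theta^0,\ldots,\Theta^j)$ with $U_j,V_j$ unchanged since they depend only on $\mathcal{B}_{j-1}$) is exactly what is meant.
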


\begin{proof}
Since Proposition \ref{nesto} holds for all $k$, we have, for any $i,j \in 1,...,k$ such that $i<j$ that $w_i$ is adjacent to $w_j$ in $\Gamma(\Theta_{P_{\mathcal{B}}})$ if and only if $v(I_i)$ is adjacent to $v(I_j)$ in $\Gamma(O)$. 
\end{proof}

\end{section}

\end{document}